\newtheorem{theorem}{Theorem}[section]
\newtheorem{example}[theorem]{Example}
\newtheorem{lemma}[theorem]{Lemma}
\newtheorem{corollary}[theorem]{Corollary}
\newtheorem{conjecture}[theorem]{Conjecture}
\newtheorem{remark}[theorem]{Remark}
\newtheorem{question}[theorem]{Question}
\newenvironment{changemargin}[2]{
\begin{list}{}{
 \setlength{\topsep}{0pt}
 \setlength{\leftmargin}{#1}
 \setlength{\rightmargin}{#2}
 \setlength{\listparindent}{\parindent}
 \setlength{\itemindent}{\parindent}
 \setlength{\parsep}{\parskip}
}
\item[]}{\end{list}}
\begin{document}

%\include{cover}
%\begin{singlespace}
%\maketitle
%\end{singlespace}
%\begin{titlepage}
%\begin{center}
%\textsc{\Large Patterns in the Coefficients of Powers of Polynomials Over a Finite Field}\\[1.5cm]
%\end{center}
%\vspace{2 cm}
%%\begin{center}\textbf{Abstract}\end{center}
%%\input{abstract}
%\end{titlepage}

\pagenumbering{arabic}
%\begin{center}
%\textsc{\Large Patterns in the Coefficients of Powers of Polynomials Over a Finite Field}\\[1.5cm]
%\end{center}
%\vspace{2 cm}
%\begin{center}\textbf{Executive Summary}\end{center}
%\input{summary}

% This is paper.tex
%\pagenumbering{arabic}
\begin{center}
\textsc{\Large Patterns in the Coefficients of Powers of Polynomials Over a Finite Field}\\[.5cm]
\textsc{Kevin Garbe}\\[1cm]
\end{center}

%\begin{center}\textbf{Abstract}\end{center}
\begin{changemargin}{1.5cm}{1.5cm}
\small \textsc{Abstract}. We examine the behavior of the coefficients of powers of polynomials over a finite field of prime order. Extending the work of Allouche-Berthe, 1997, we study  $a(n)$, the number of occurring strings of length $n$ among coefficients of any power of a polynomial $f$ reduced modulo a prime $p$. The sequence of line complexity $a(n)$ is $p$-regular in the sense of Allouche-Shalit. For $f=1+x$ and general $p$, we derive a recursion relation for $a(n)$ then find a new formula for the generating function for $a(n)$. We use the generating function to compute the asymptotics of $a(n)/n^2$ as $n\to\infty$, which is an explicitly computable piecewise quadratic in $x$ with $n=\lfloor p^m/x\rfloor$ and $x$ is a real number between $1/p$ and 1. Analyzing other cases, we form a conjecture about the generating function for general $a(n)$. We examine the matrix $B$ associated with $f$ and $p$ used to compute the count of a coefficient, which applies to the theory of linear cellular automata and fractals. For $p=2$ and polynomials of small degree we compute the largest positive eigenvalue, $\lambda$, of $B$, related to the fractal dimension $d$ of the corresponding fractal by $d=\log_2(\lambda)$. We find proofs and make a number of conjectures for some bounds on $\lambda$ and upper bounds on its degree.
\end{changemargin}
\normalsize
\section{Introduction}
%The famous Hanoi tower, invented by the French mathematician E. Lucas in 1884, is not just an amazing puzzle, but actually has a deep mathematical meaning. 
%\footnote{Recall that the Hanoi tower has $3$ rods and a collection of $n$ disks of different sizes which can slide onto any rod. In a legal move, a disk can be moved from one rod to another, but it cannot be placed on top of a smaller disk. The goal is to move a tower of $n$ disks, with size decreasing from bottom to top, from one rod to another, making only legal moves.}  
%After getting desperate trying to solve it, you draw the accessibility graph of all the $3^n$ legal positions of the disks, and observe that it is nothing but the Pascal triangle modulo $2$. 
%Namely, the legal positions are just the 1s in the Pascal triangle mod $2$ up to the $2^n$-th row, and the neighboring 1s are connected with an edge. 
%As $n$ grows, these graphs converge to the Sierpinski triangle, which is one of the most well-known fractals (of Hausdorff dimension $\log_2(3)$).

%Behind the scenes here is the notion of a  1-dimensional linear cellular automata. Such an automaton is a linear time-independent recursion on the space of sequences (labeled by all integers) with values in $\Bbb Z/p$. 
It was shown by S. Wolfram and others in 1980s that 1-dimensional linear cellular automata lead at large scale to interesting examples of fractals. 
A basic example is the automaton associated to a polynomial $f$ over $\Bbb Z/p$, whose transition matrix $T_f$ is the matrix of multiplication by $f(x)$ on the space of Laurent polynomials in $x$. 
If $f=1+x$, then starting with the initial state $g_0(x)=1$, one recovers Pascal's triangle mod $p$.
For $p=2$, at large scale, it produces the Sierpinski triangle shown in Figure \ref{fig:1x2}. Similarly, the case of $f=1+x+x^2$, $p=2$, and initial state $g_0(x)=1$ produces the fractal shown in Figure \ref{fig:1xx22}.

The double sequences produced by such automata, i.e., the sequences encoding the coefficients of the powers of $f$, have a very interesting structure. 
Namely, if $p$ is a prime, they are $p$-automatic sequences in the sense of \cite{AS1}. 
In the case $f=1+x$, this follows from Lucas' theorem that $\binom{n}{k}=\prod\limits_i\binom{n_i}{k_i}$ mod $p$, where $n_i,k_i$ are the $p$-ary digits of $n,k$. 

In \cite{W1,W2}, S. Wilson studied this example in the case where $f$ is any polynomial, and computed the fractal dimension of the corresponding fractal. 
The answer is $\beta=\log_p(\lambda)$, where $p\le \lambda\le p^2$ is the largest (Perron-Frobenius) eigenvalue of a certain integer matrix $B$ associated to $f$ (in particular, an algebraic integer). 
In terms of coefficients of powers of $f$, this number characterizes the rate of growth of the total number of nonzero coefficients in $f^i$ for $0\le i<p^n$: this number behaves like $n^\beta$. 
The number of nonzero coefficients of each kind can actually be computed exactly at every step of the recursion, by using a matrix method similar to Wilson's; this is explained in the paper \cite{AS1}. 

In this paper, we compute the eigenvalues $\lambda$ and their degrees for $p=2$ for Laurent polynomials $f$ of small degrees, observe some patterns, and make a number of conjectures (in particular, that $\lambda$ can be arbitrarily close to $4$) in Section \ref{sec:Eigen}. 
We also prove an upper bound for $\lambda$ depending on the degree of $f$. 

The size of the matrix $B$ (which is an upper bound for the degree of $\lambda$) is the number of accessible blocks (i.e., strings that occur in the sequence of coefficients of $f^i$ for some $i$) of length $deg(f)$ (for $p=2$). 
This raises the question of finding the number $a(n)$ of accessible blocks of any length $n$. 
The number $a(n)$ characterizes the so-called line complexity of the corresponding linear automaton, and is studied in the paper \cite{AB}. 
It is shown in \cite{AB},\cite{B}, and references therein that $C_1n^2\le a(n)\le C_2n^2$, and that for $p=2$ and $f=1+x$, one has $a(n)=n^2-n+2$. 
More generally, however, the sequence $a(n)$ does not have such a simple form, even for $f=1+x$ and $p>2$. 
The paper \cite{AB} derives a recursion for this sequence, and we derive another one in Section \ref{sec:Recur1xp}, which is equivalent. 
These recursions show that the sequence $a(n)$ is $p$-regular in the sense of \cite{AS2} (the notion of $p$-regularity is a generalization of the notion of $p$-automaticity, to the case of integer, rather than mod $p$, values). 
We then proceed to find a new formula for the generating function for $a(n)$ in Section \ref{sec:Closed}, and use it to compute the asymptotics of $a(n)/n^2$ as $n\to \infty$ in Section \ref{sec:Limits}. 
It turns out that if $n=\lfloor{p^m/x}\rfloor$, where $x$ is a real number between $1/p$ and $1$, then $f(n)/n^2$ tends to an explicit function of $x$, which is piecewise quadratic (a gluing together of 3 quadratic functions, which we explicitly compute). 
In Section \ref{sec:Limits} we also compute the maximum and minimum value of this function, which gives the best asymptotic values for $C_1$ and $C_2$. 
This gives us new precise results about the complexity of the Pascal triangle mod $p$. 
We also perform a similar analysis for $f=1+x+x^2$ and $p=2$, and make a conjecture about the general case.

\begin{figure}[h]
\begin{center}
\includegraphics[width=.7\textwidth]{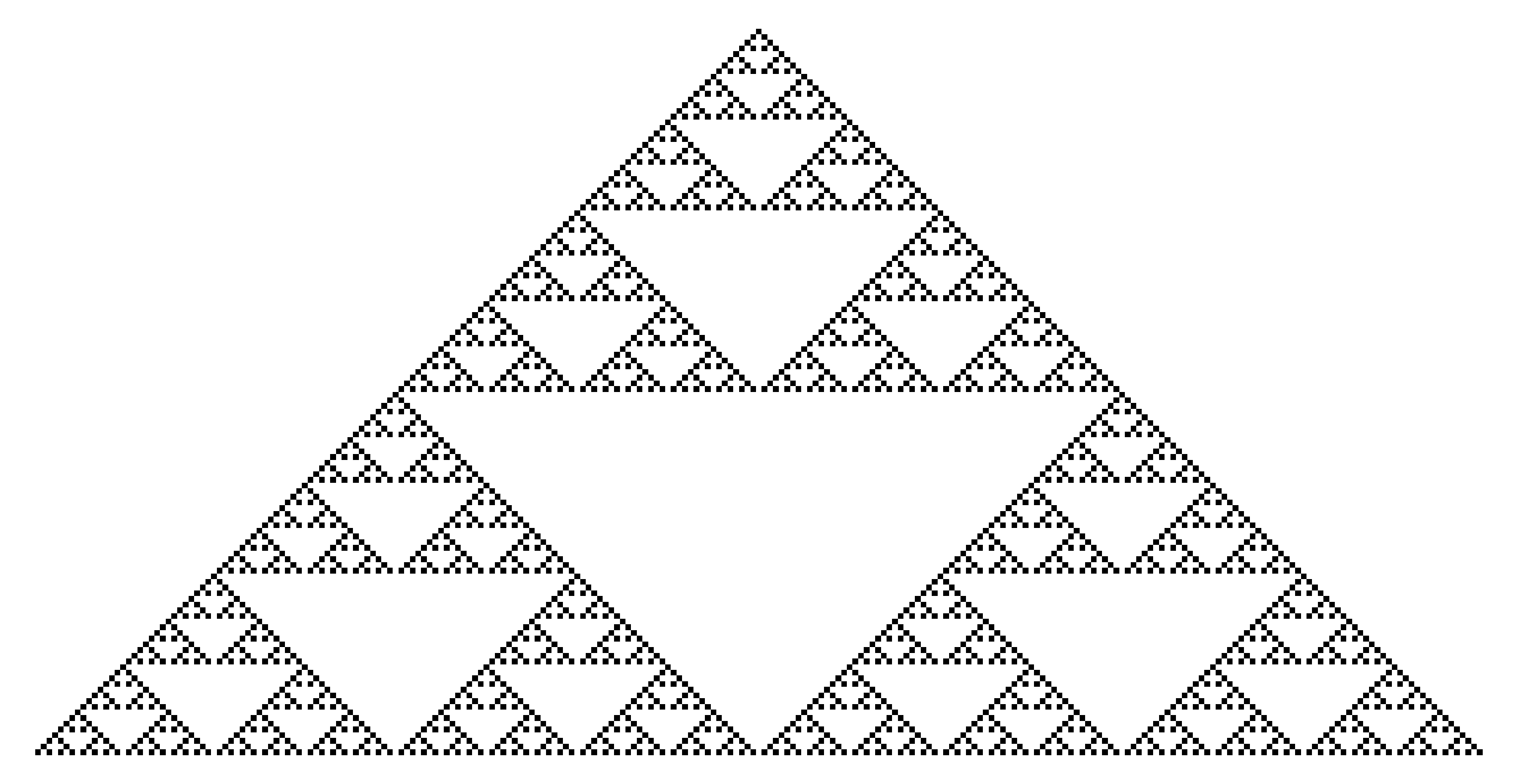}
\caption{Fractal corresponding to $1+x$ modulo 2 (Sierpinski's Triangle)}
\label{fig:1x2}
\includegraphics[width=.7\textwidth]{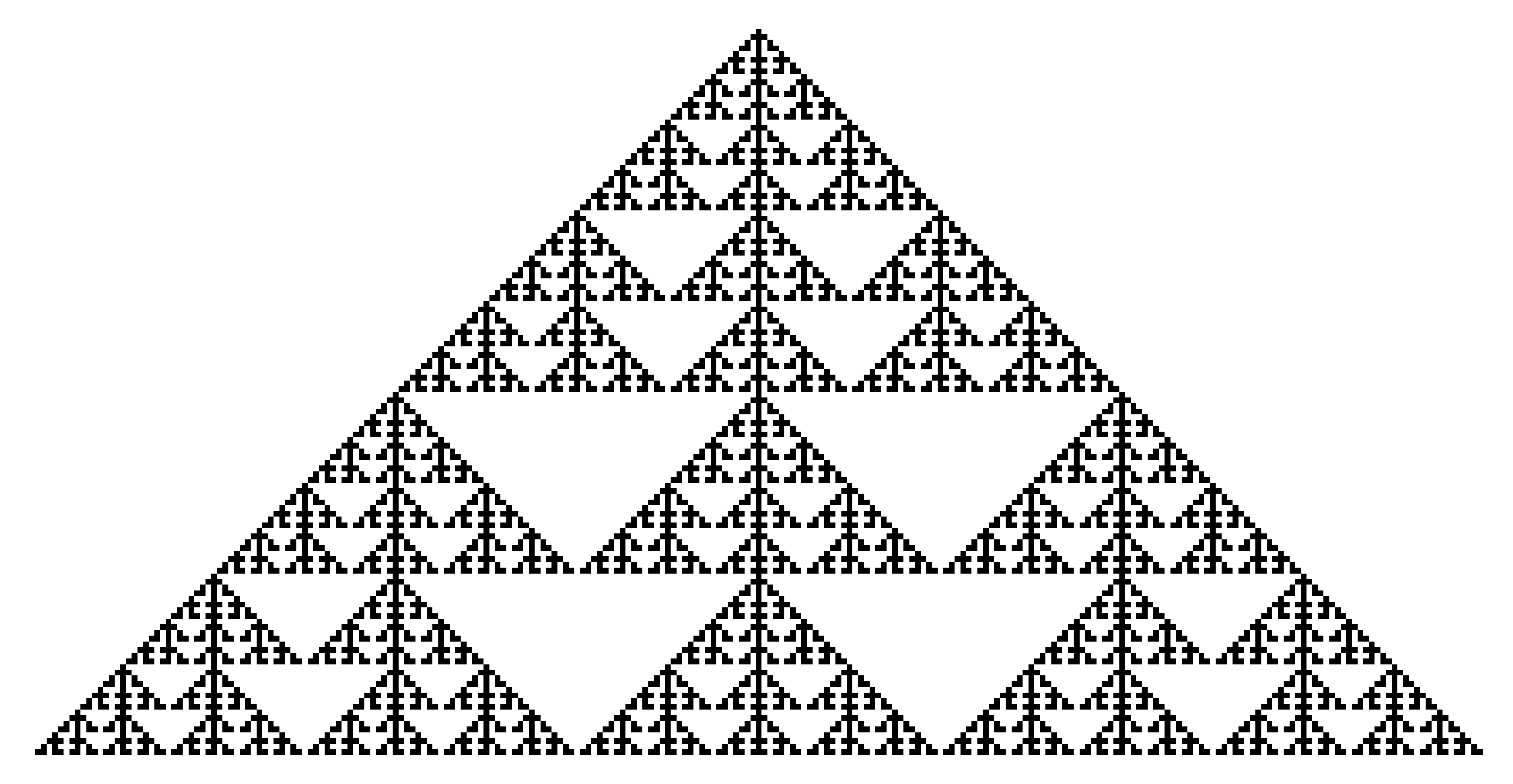}
\caption{Fractal corresponding to $1+x+x^2$ modulo 2}
\label{fig:1xx22}
\end{center}
\end{figure}

\section{Accessible Blocks}
\label{sec:Blocks}

\subsection{Definitions}
\label{sec:BlockDef}

A block is a string of mod $p$ digits. An $m$-block is a block with $m$ digits. For example, the four $2$-blocks modulo $2$ are $00,01,11,$ and $11$.

For a polynomial $f(x)$ with integer coefficients reduced modulo $p$, an accessible $m$-block is an $m$-block that appears anywhere among the coefficients, ordered by powers of x, of powers of $f(x)$ modulo $p$. 
The number of accessible $0$-blocks we define to be $1$.
Furthermore, we define row $k$ for some $f(x)$ and $p$ to be the coefficients of $f(x)^k$ reduced modulo $p$ and define $a_{f(x),p}(m)$ to be the number of accessible $m$-blocks for the polynomial $f(x)$ and prime $p$. 

\begin{example}
For $f(x)=1+x$ and $p=2$, the $4$-blocks $1101$ and $1011$ are never a substring of any power of $1+x$ reduced modulo $2$. Every other $4$-block appears in some power of $1+x$ reduced modulo $2$, so $a_{1+x,2}(4)=14$.
\end{example}

\subsection{Recursion Relations for $a(n)$}
\label{sec:Recursion}

We start with the well known fact in Lemma \ref{lem:multipower}.
\begin{lemma}
\label{lem:multipower}
$f(x)^{k\cdot p} \equiv f(x^p)^k \pmod{p}\text{.}$
\end{lemma}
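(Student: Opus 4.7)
The plan is to reduce the statement to the case $k=1$, i.e., to show $f(x)^p \equiv f(x^p) \pmod{p}$, and then raise both sides to the $k$-th power.

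First I would establish the polynomial version of the Frobenius/freshman's dream. Writing $f(x) = \sum_{i=0}^{d} c_i x^i$, the multinomial theorem gives
\[
\left(\sum_{i=0}^{d} c_i x^i\right)^p = \sum_{e_0 + \cdots + e_d = p} \binom{p}{e_0, \ldots, e_d} \prod_{i=0}^{d} (c_i x^i)^{e_i}.
\]
Since $p$ is prime, the multinomial coefficient $\binom{p}{e_0, \ldots, e_d}$ is divisible by $p$ unless some $e_j = p$ (and the rest are zero). Thus modulo $p$ only the diagonal terms survive, giving
\[
f(x)^p \equiv \sum_{i=0}^{d} c_i^{\,p} x^{ip} \pmod{p}.
\]
Applying Fermat's little theorem $c_i^p \equiv c_i \pmod{p}$ to each coefficient yields $\sum_i c_i x^{ip} = f(x^p)$, which is the base case.

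Then I would raise both sides to the $k$-th power to conclude
\[
f(x)^{kp} = (f(x)^p)^k \equiv f(x^p)^k \pmod{p},
\]
using the fact that congruence of polynomials modulo $p$ is preserved under products. There is no real obstacle here; the argument is entirely routine once one invokes the standard divisibility property of $\binom{p}{e_0, \ldots, e_d}$. The only subtlety worth flagging is that the identity holds in $\mathbb{Z}[x]$ modulo $p$, and nothing needs to be said about invertibility of elements since we work with integer coefficients throughout.
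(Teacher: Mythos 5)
Your proof is correct and complete: the multinomial-coefficient divisibility argument plus Fermat's little theorem for the coefficients gives $f(x)^p\equiv f(x^p)\pmod p$, and raising to the $k$-th power finishes it. The paper itself offers no proof, simply citing the lemma as a well-known fact, so your write-up supplies exactly the standard argument the paper leaves implicit; there is nothing to reconcile between the two.
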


Applying Lemma \ref{lem:multipower} to the accessible blocks, we have Corollary \ref{cor:zerospace}.

\begin{corollary}
\label{cor:zerospace}
For any integer $k$, prime $p$, and polynomial $f(x)$, every row $k\cdot p$ for $f(x)$ mod $p$ is of the form $b_10\ldots0 b_20\ldots\ldots 0b_{n-1}0\ldots 0b_n$ where the entries $b_i$ are the coefficients of $f(x)^k$, and where each string of zeros between two entries $b_i$ and $b_{i+1}$ is of length $p-1$. 
Therefore, every accessible block from a row divisible by $p$ is a subsection of $b_10\ldots0 b_20\ldots\ldots 0b_{n-1}0\ldots 0b_n$.
\end{corollary}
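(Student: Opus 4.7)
The plan is to derive this directly from Lemma \ref{lem:multipower}, which gives $f(x)^{kp}\equiv f(x^p)^k \pmod p$. First I would expand $f(x)^k\pmod p$ as a polynomial $\sum_{i=0}^n b_i x^i$, where the $b_i$ are the coefficients (residues mod $p$) referenced in the statement. Substituting $x\mapsto x^p$ then yields $f(x^p)^k \equiv \sum_{i=0}^n b_i x^{pi}\pmod p$, so by the lemma the coefficient of $x^j$ in $f(x)^{kp}$ mod $p$ is $b_{j/p}$ when $p\mid j$ and $0$ otherwise.

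Reading the coefficients in order of increasing powers of $x$ gives exactly the string $b_0\,\underbrace{0\cdots 0}_{p-1}\,b_1\,\underbrace{0\cdots 0}_{p-1}\,b_2\,\cdots\,b_{n-1}\,\underbrace{0\cdots 0}_{p-1}\,b_n$ asserted in the statement, since between the nonzero positions $pi$ and $p(i+1)$ there are precisely $p-1$ intermediate exponents, all carrying coefficient $0$. The second assertion — that every accessible block from a row divisible by $p$ is a substring of this sequence — is then immediate from the definition of an accessible block as a contiguous substring of the coefficient sequence of some row.

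There is no substantive obstacle here; the entire content of the corollary is a bookkeeping translation of Lemma \ref{lem:multipower} into a statement about the coefficient sequence, and the only mild subtlety is to be explicit about the indexing so that ``$p-1$ zeros'' is read off correctly from the gap between consecutive exponents of the form $pi$.
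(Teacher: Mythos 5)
Your proposal is correct and follows exactly the route the paper takes: the paper offers no separate proof but simply notes that the corollary follows by applying Lemma \ref{lem:multipower}, which is precisely the substitution $x\mapsto x^p$ bookkeeping you carry out. The only cosmetic difference is your indexing of the coefficients from $b_0$ rather than $b_1$, which does not affect the argument.
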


\subsubsection{Accessible $m$-Blocks for $f(x)=1+x$ and General Prime $p$}
\label{sec:Recur1xp}

The number of accessible $m$-blocks for $f(x)=1+x$ and any prime $p$, $a_{1+x,p}$, is defined by the recurrence relation in Theorem \ref{thm:general1x}.

\begin{theorem}
\label{thm:general1x}
For $f(x)=1+x$ and any prime $p\ge3$, for $0\leq k \leq p-1$, the recursion relation with starting points $a_{1+x,p}(0)=1, a_{1+x,p}(1)=p, \text{ and } a_{1+x,p}(2)=p^2$ is
\begin{align*}
a_{1+x,p}(p \cdot n+k)=&\frac{(p-k)(p-k+1)}{2} \cdot a_{1+x,p}(n)+(kp+k-k^2+\frac{p^2-p}{2}) \cdot a_{1+x,p}(n+1)\\&+\frac{k^2-k}{2} \cdot a_{1+x,p}(n+2)-(2p-1)(2p-2).
\end{align*}
\end{theorem}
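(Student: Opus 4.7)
My approach applies Lemma \ref{lem:multipower} to decompose each row. Writing $q = Qp + r$ with $0 \le r \le p - 1$ gives $(1+x)^q \equiv (1+x^p)^Q (1+x)^r \pmod p$, so the coefficient sequence of $(1+x)^q$ splits into consecutive length-$p$ blocks, block $i$ equaling $c_i v_r$, where $c_i = \binom{Q}{i} \bmod p$ and $v_r = (\binom{r}{0}, \binom{r}{1}, \ldots, \binom{r}{p-1}) \bmod p$, using the convention $\binom{r}{j} = 0$ for $j > r$. Two properties of $v_r$ drive the argument: $v_r[0] = 1$ always, and $v_r[t] \ne 0$ in $\mathbb{F}_p$ iff $0 \le t \le r$. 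As $Q$ varies over $\mathbb{N}$, the sequence $(c_i)$ ranges over all rows of $(1+x)$ mod $p$, so an arbitrary length-$L$ substring of some $(c_i)$ is exactly an accessible $L$-block, of which there are $a_{1+x,p}(L)$.

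Every accessible $(pn+k)$-block is thus specified by a triple $(t, r, c)$: the starting offset $t \in \{0, \ldots, p-1\}$ of the window modulo $p$, the residue $r$ of the row modulo $p$, and the substring $c = (c_\ell, \ldots, c_{\ell+L-1})$ covered, where a case split on $t + k$ gives $L = n$ if $t = k = 0$, $L = n+1$ if $0 < t+k \le p$, and $L = n+2$ if $t+k > p$. The crucial refinement: when $t > r$, the partial first block of the window is identically zero (since $\binom{r}{j} = 0$ for $j \ge t > r$), so $c_\ell$ is invisible and the effective $c$-sequence has length $L - 1$; in all other cases ($t = 0$ or $1 \le t \le r$) the entire $c$-sequence is recoverable from the $m$-block, noting that $c_{\ell+L-1}$ is always recoverable via $v_r[0] = 1$. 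Summing $a(L_{\mathrm{eff}})$ over all $p^2$ pairs $(t, r)$, a short arithmetic computation (for instance, for $k = 0$ the $a(n)$-contribution is $p$ pairs with $t = 0$ plus $\sum_{t=1}^{p-1} t$ pairs with $t > r$, totaling $p(p+1)/2$) yields exactly the coefficients $\frac{(p-k)(p-k+1)}{2}$ of $a(n)$, $kp+k-k^2+\frac{p^2-p}{2}$ of $a(n+1)$, and $\frac{k^2-k}{2}$ of $a(n+2)$.

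The final step subtracts the overcount: the number of extra triples $(t, r, c)$ beyond the first producing the same $m$-block. This overcount is independent of $n$ because every coincidence traces to a "structurally simple" $m$-block—the all-zero block (which arises from all $p^2$ pairs $(t, r)$ with the all-zero $c$-sequence, contributing overcount $p^2 - 1$) and a few small templates such as those of the form "single scaled $v_r$ surrounded by zeros" that the triangular structure of $v_r$ allows to appear in several $(t, r)$ slots—whose total extra count depends only on $p$. Explicit inclusion-exclusion over these finitely many coincidence types is expected to yield the constant $(2p-1)(2p-2)$. The main obstacle is this precise bookkeeping, i.e., enumerating all coincidence templates and verifying that their contributions sum to exactly $(2p-1)(2p-2)$ rather than some other $p$-dependent constant. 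A useful sanity check is $p = 3$, $n = 1$, $k = 0$: direct enumeration gives raw total $45 = 6 \cdot 3 + 3 \cdot 9$, actual count $25$, and overcount $20 = (2 \cdot 3 - 1)(2 \cdot 3 - 2)$, distributed as $8$ from the all-zero block (multiplicity $9$) and $2$ each from six specific nonzero templates. The base cases $a(0) = 1$, $a(1) = p$, $a(2) = p^2$ are immediate.
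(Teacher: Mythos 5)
Your approach is essentially the paper's: the triples $(t,r,c)$ you enumerate are exactly the ``forms'' tabulated in the paper's Table~1 (indexed there by the row residue mod $p$ and the position of the window within a period), and your Lucas-style factorization $(1+x)^{Qp+r}\equiv(1+x^p)^Q(1+x)^r$ is a cleaner derivation of those forms than the paper's ``insert $p-1$ zeros and iterate the Pascal rule'' description. Your computation of the three coefficients --- resting on the observation that the first partial block of the window is invisible exactly when $t>r$, since $\binom{r}{j}=0$ for $j>r$, while the last partial block is always visible through $v_r[0]=1$ --- is correct and reproduces $\frac{(p-k)(p-k+1)}{2}$, $kp+k-k^2+\frac{p^2-p}{2}$ and $\frac{k^2-k}{2}$ for all $k$.

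The one genuine gap is the constant term: you say that inclusion--exclusion over the coincidence templates is ``expected'' to give $(2p-1)(2p-2)$, but you do not carry it out, so the subtracted term is asserted rather than proved. In fairness, this is also the weakest point of the paper's own proof, which likewise only states that $(2p-1)(2p-2)$ accounts for blocks satisfying multiple forms. To close the gap you would need to show that, for $n$ large enough, every $m$-block arising from more than one triple $(t,r,c)$ belongs to a finite list of templates independent of $n$ (the all-zero block plus the ``isolated scaled $v_r$'' patterns you mention), classify them, and verify that the excess multiplicities sum to exactly $(2p-1)(2p-2)$; your $p=3$, $n=1$, $k=0$ check (raw count $45$, true count $25$, overcount $20$) is a correct and encouraging data point but not a proof for general $p$, and one must also confirm the identity for the small $n$ where templates could overlap.
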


\begin{proof}
From Corollary \ref{cor:zerospace}, every accessible block in a row $r$ with $r \equiv 0 \pmod{p}$ is formed by adding $p-1$ zeros between every digit of an accessible block, then adding some number of zeros (possibly none) less than $p$ to either side. 
Furthermore, because $f(x)=1+x$, the coefficient of $x^i$ in a row is the sum modulo $p$ of the coefficients of $x^i$ and $x^{i-1}$ in the previous row. 
Because accessible blocks are subsections of a row, any accessible $m$-block comes from an accessible $(m+1)$-block. 
Table \ref{tbl:epictable} provides the general forms of the $(p\cdot n+k)$-blocks for each row modulo $p$. 
To count the multiple additions of $b$ in the forms, we define $g_i={p-1 \choose i}$.

\begin{table}[h]
\begin{center}
\begin{tabular}{|c|c|c|c|c|c|} \hline
&\multicolumn{5}{|c|}{Blocks for $k=$}\\\hline
\begin{tabular}{c}Row\\mod\\$p$\end{tabular} &0&1&2&$\cdots$&$p-1$
\\\hline  0 &
\begin{tabular}{*7{@{}c@{}}}
$b_1000$&$\ldots$&$00b_200$&$\ldots\ldots$&$00b_n00$&$\ldots$&$000$\\
$0b_100$&$\ldots$&$000b_20$&$\ldots\ldots$&$000b_n0$&$\ldots$&$000$\\
$00b_10$&$\ldots$&$0000b_2$&$\ldots\ldots$&$0000b_n$&$\ldots$&$000$\\
$\vdots$&$\vdots$&$\vdots$&$\ddots$&$\vdots$&$\vdots$&$\vdots$\\
$0000$&$\ldots$&$b_10000$&$\ldots\ldots$&$b_{n-1}0000$&$\ldots$&$0b_n0$\\
$0000$&$\ldots$&$0b_1000$&$\ldots\ldots$&$0b_{n-1}000$&$\ldots$&$00b_n$
\end{tabular}
&\begin{tabular}{@{}c@{}}$b_{n+1}$\\0\\0\\$\vdots$\\0\\0\end{tabular}
&\begin{tabular}{@{}c@{}}0\\$b_{n+1}$\\0\\$\vdots$\\0\\0\end{tabular}
&\begin{tabular}{@{}c@{}}$\cdots$\\$\cdots$\\$\cdots$\\$\ddots$\\$\cdots$\\$\cdots$\end{tabular}
&\begin{tabular}{@{}c@{}}0\\0\\0\\$\vdots$\\$b_{n+1}$\\0\end{tabular}
\\\hline  1 &
\begin{tabular}{*7{@{}c@{}}}
$b_1000$&$\ldots$&$0b_2b_200$&$\ldots\ldots$&$0b_nb_n00$&$\ldots$&$00b_{n+1}$\\
$b_1b_100$&$\ldots$&$00b_2b_20$&$\ldots\ldots$&$00b_nb_n0$&$\ldots$&$000$\\
$0b_1b_10$&$\ldots$&$000b_2b_2$&$\ldots\ldots$&$000b_nb_n$&$\ldots$&$000$\\
$\vdots$&$\vdots$&$\vdots$&$\ddots$&$\vdots$&$\vdots$&$\vdots$\\
$0000$&$\ldots$&$b_10000$&$\ldots\ldots$&$b_{n-1}0000$&$\ldots$&$b_n0$\\
$0000$&$\ldots$&$b_1b_1000$&$\ldots\ldots$&$b_{n-1}b_{n-1}000$&$\ldots$&$b_nb_n$
\end{tabular}
&\begin{tabular}{@{}c@{}}$b_{n+1}$\\$b_{n+1}$\\0\\$\vdots$\\0\\0\end{tabular}
&\begin{tabular}{@{}c@{}}0\\$b_{n+1}$\\$b_{n+1}$\\$\vdots$\\0\\0\end{tabular}
&\begin{tabular}{@{}c@{}}$\cdots$\\$\cdots$\\$\cdots$\\$\ddots$\\$\cdots$\\$\cdots$\end{tabular}
&\begin{tabular}{@{}c@{}}0\\0\\0\\$\vdots$\\$b_{n+1}$\\$b_{n+1}$\end{tabular}
\\\hline $\vdots$ &$\vdots$&$\vdots$&$\vdots$&$\ddots$&$\vdots$
\\\hline $p-1$&
\begin{tabular}{*3{@{}c@{}}}
$b_1b_2(g_2b_2)$&$\ldots \ldots$&$(g_4b_{n+1})(g_3b_{n+1})(g_2b_{n+1})$\\
$(g_2b_1)b_1b_2$&$\ldots \ldots$&$(g_5b_{n+1})(g_4b_{n+1})(g_3b_{n+1})$\\
$(g_3b_1)(g_2b_1)b_1$&$\ldots \ldots$&$(g_6b_{n+1})(g_5b_{n+1})(g_4b_{n+1})$\\
$\vdots$&$\ddots$&$\vdots$\\
$(g_2b_1)(g_3b_1)(g_4b_1)$&$\ldots \ldots$&$(g_2b_n)b_nb_{n+1}$\\
$b_1(g_2b_1)(g_3b_1)$&$\ldots \ldots$&$(g_3b_n)(g_2b_n)b_n$
\end{tabular}
&\begin{tabular}{@{}c@{}}$b_{n+1}$\\$(g_2b_{n+1})$\\$(g_3b_{n+1})$\\$\vdots$\\$(g_2b_{n+1})$\\$b_{n+1}$\end{tabular}
&\begin{tabular}{@{}c@{}}$b_{n+2}$\\$b_{n+1}$\\$(g_2b_{n+1})$\\$\vdots$\\$(g_3b_{n+1})$\\$(g_2b_{n+1})$\end{tabular}
&\begin{tabular}{@{}c@{}}$\cdots$\\$\cdots$\\$\cdots$\\$\ddots$\\$\cdots$\\$\cdots$\end{tabular}
&\begin{tabular}{@{}c@{}}$(g_3b_{n+2})$\\$(g_4b_{n+2})$\\$(g_5b_{n+2})$\\$\vdots$\\$b_{n+1}$\\$(g_2b_{n+1})$\end{tabular}
\\\hline
\end{tabular}
\end{center}
\caption{Forms of blocks for the general case $1+x$ with any prime $p$}
\label{tbl:epictable}
\end{table}

The number of accessible blocks that lead into each form in Table \ref{tbl:epictable} are the triangular numbers counting downwards for $a_{1+x,p}(n)$, the triangular numbers counting upward for $a_{1+x,2}(n+2)$, and because the total number of forms is $p^2$, we find $a_{1+x,p}(n+1)$ through subtraction. 
Namely, the factor of $a_{1+x,p}(n)$ starts at $p$ for $row$ congruent to $0$ modulo $p$ and $k$=0, and decreases as $k$ and $row$ increase, and the coefficient of $a_{1+x,p}(n+2)$ starts at $0$ for $row$ congruent to $0$ and $1$ modulo $p$ and increases with $k$ and $row$. 
An additional $(2p-1)(2p-2)$ must be subtracted to account for blocks that satisfy multiple forms. 
Therefore \begin{align*}a_{1+x,p}(p \cdot n+k)=&\frac{(p-k)(p-k+1)}{2}\cdot a_{1+x,p}(n)+(kp+k-k^2+\frac{p^2-p}{2}) \cdot a_{1+x,p}(n+1)\\&+\frac{k^2-k}{2} \cdot a_{1+x,p}(n+2)-(2p-1)(2p-2).\end{align*}
\end{proof}

This is equivalent to Theorem 5.10 of Allouche-Berthe \cite{AB}, reproduced below in Theorem \ref{thm:AB}.
\begin{theorem}
\label{thm:AB}
For $0\leq k \leq p-1$ and $n \geq 0$ such that $pn+k \geq 3$
\begin{align*}a_{1+x,2}(p n+k +1)-a_{1+x,2}(p n+k)=&(p-k)\Big(a_{1+x,2}(n+1)-a_{1+x,2}(n)\Big)
\\&+k\Big(a_{1+x,2}(n+2)-a_{1+x,2}(n+1)\Big)\end{align*}
with starting points $a_{1+x,2}(0)=1$, $a_{1+x,2}(1)=p$, $a_{1+x,2}(2)=p^2$, and $a_{1+x,2}(3)=\frac{p^3+4p^2-5p+2}{2}$.
\end{theorem}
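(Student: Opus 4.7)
The plan is to prove Theorem \ref{thm:AB} by establishing its equivalence to the just-proved Theorem \ref{thm:general1x}. Since a linear recurrence of this shape is determined by finitely many initial values, it suffices to check that Theorem \ref{thm:general1x} forces both the difference identity of Theorem \ref{thm:AB} and its extra base value $a_{1+x,p}(3)=(p^3+4p^2-5p+2)/2$; the reverse implication then follows by telescoping.

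First, for $0\le k\le p-2$, I would subtract the Theorem \ref{thm:general1x} formula for $a_{1+x,p}(pn+k)$ from its analogue with $k$ replaced by $k+1$. The additive constant $-(2p-1)(2p-2)$ cancels. A short calculation shows that the coefficients of $a_{1+x,p}(n)$, $a_{1+x,p}(n+1)$, $a_{1+x,p}(n+2)$ come out to $-(p-k)$, $p-2k$, and $k$, respectively. Regrouping these as
\[
(p-k)\bigl(a_{1+x,p}(n+1)-a_{1+x,p}(n)\bigr)+k\bigl(a_{1+x,p}(n+2)-a_{1+x,p}(n+1)\bigr)
\]
recovers precisely the identity of Theorem \ref{thm:AB}.

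Second, the boundary case $k=p-1$ has to be treated separately because $pn+k+1=p(n+1)$ falls into the next residue class. Here I would apply Theorem \ref{thm:general1x} once with parameters $(n,\,k=p-1)$ and once with parameters $(n+1,\,k=0)$, then subtract. The additive constants again cancel, and the resulting coefficients of $a_{1+x,p}(n)$, $a_{1+x,p}(n+1)$, $a_{1+x,p}(n+2)$ simplify to $-1$, $2-p$, and $p-1$, matching the $k=p-1$ specialization of Theorem \ref{thm:AB}.

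Finally, I would check the extra starting value $a_{1+x,p}(3)$. For $p\ge 5$ one applies Theorem \ref{thm:general1x} with $(n,k)=(0,3)$ and substitutes $a_{1+x,p}(0)=1$, $a_{1+x,p}(1)=p$, $a_{1+x,p}(2)=p^2$; for $p=3$ one instead uses $(n,k)=(1,0)$, noting that the coefficient $\tfrac{k^2-k}{2}=0$ of $a_{1+x,p}(n+2)$ means $a_{1+x,p}(3)$ does not appear on the right-hand side. In both cases a direct simplification yields $(p^3+4p^2-5p+2)/2$. The main obstacle is purely algebraic bookkeeping, in particular handling the boundary case $k=p-1$ and the special case $p=3$ correctly; there is no conceptual difficulty.
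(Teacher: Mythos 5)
Your derivation is correct, but note that the paper itself offers no proof of Theorem \ref{thm:AB}: the result is quoted from Allouche--Berth\'e (Theorem 5.10 of \cite{AB}), and the paper merely asserts, without verification, that it is ``equivalent'' to Theorem \ref{thm:general1x}. What you have written is precisely the missing verification, in the direction needed to deduce Theorem \ref{thm:AB} from Theorem \ref{thm:general1x}. Your coefficient computations all check out: for $0\le k\le p-2$ the difference of the two instances of Theorem \ref{thm:general1x} gives coefficients $-(p-k)$, $p-2k$, $k$ on $a(n)$, $a(n+1)$, $a(n+2)$, which regroup as claimed; for $k=p-1$ the comparison of the $(n,\,p-1)$ and $(n+1,\,0)$ instances gives $-1$, $2-p$, $p-1$ (the $a(n+3)$ term in the latter instance drops out since its coefficient $\tfrac{k^2-k}{2}$ vanishes at $k=0$), matching the $k=p-1$ specialization; and the value $a(3)=(p^3+4p^2-5p+2)/2$ does follow from $(n,k)=(0,3)$ for $p\ge5$ and from $(n,k)=(1,0)$ for $p=3$, where, as you note, the vanishing coefficient of $a(n+2)$ avoids circularity. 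Two small remarks. First, the telescoping ``reverse implication'' is not needed to prove Theorem \ref{thm:AB} itself: since Theorem \ref{thm:general1x} holds for the actual sequence of block counts, any algebraic consequence of it holds as well. Second, Theorem \ref{thm:general1x} is stated only for $p\ge3$, so your argument inherits that restriction; the $p=2$ reading of the statement (where the displayed $a(3)$ equals $8=3^2-3+2$) must instead be checked against Theorem \ref{thm:closedsimple} or the $p=2$ recursion used in its proof. Compared with the paper's bare citation, your route buys a self-contained proof and an explicit confirmation of the claimed equivalence; its cost is dependence on the rather informally proved Theorem \ref{thm:general1x}.
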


\subsubsection{Accessible $m$-Blocks for  $c+x+x^2$ and prime $p$}
\label{sec:RecurMore}

Table \ref{tbl:cxx^2data} provides $a_{c+x+x^2,p}(n)$ for small $n$ and $p$.

\begin{table}
\begin{center}
\begin{tabular}{|c|c|c|}
\hline
Prime&$c$&a(n)\\
\hline
2&1&\begin{tabular}{llllllllll}2&4&8&4&25&36&53&70&92&114\end{tabular}\\\hline
3&1&\begin{tabular}{llllllllll}3&9&25&43&71&109&157&207&259&313\end{tabular}\\\hline
3&2&\begin{tabular}{llllllllll}3&9&25&61&105&165&233&321&417&533\end{tabular}\\\hline
5&1&\begin{tabular}{llllllllll}5&25&121&393&673&929&1257&1761&2341&3097\end{tabular}\\\hline
5&2&\begin{tabular}{llllllllll}5&25&125&393&689&953&1293&1801&2389&3145\end{tabular}\\\hline
5&3&\begin{tabular}{llllllllll}5&25&117&385&657&905&1221&1713&2277&3017\end{tabular}\\\hline
5&4&\begin{tabular}{llllllllll}5&25&101&169&253&353&509&721&989&1313\end{tabular}\\\hline
7&1&\begin{tabular}{lllllll}7&49&331&1285&2137&2881&3859\end{tabular}\\\hline
\end{tabular}
\end{center}
\caption{$a(n)$ for $c+x+x^2$}
\label{tbl:cxx^2data}
\end{table}

Using a method similar to the one we used for Theorem \ref{thm:general1x}, the recursion relations appear to be those shown in Table \ref{tbl:cxx2recur}.

\begin{table}
\begin{center}
\begin{tabular}{|c|c|c|c|c|}
\hline
$p$&$c$&Recursion&k&initial\\\hline
2&1&\begin{tabular}{c}2a(n)+2a(n+1)\\a(n)+2a(n+1)+a(n+2)\end{tabular}&8&1,2,4,8,14,25\\\hline
3&1&\begin{tabular}{c}6a(n)+3a(n+1)\\3a(n)+6a(n+1)\\a(n)+7a(n+1)+a(n+2)\end{tabular}&20&1,3,9,25\\\hline
3&2&\begin{tabular}{c}4a(n)+4a(n+1)+a(n+2)\\2a(n)+5a(n+1)+2a(n+2)\\a(n)+4a(n+1)+4a(n+2)\end{tabular}&32&1,3,9,25,61,105\\\hline
5&1&\begin{tabular}{c}
9a(n)+12a(n+1)+4a(n+2)\\6a(n)+13a(n+1)+6a(n+2)\\4a(n)+12a(n+1)+9a(n+2)\\2a(n)+10a(n+1)+12a(n+2)+a(n+3)\\a(n)+12a(n+1)+10a(n+2)+2a(n+3)
\end{tabular}&152&1,5,25,121,393,673\\\hline
5&2&\begin{tabular}{c}
9a(n)+12a(n+1)+4a(n+2)\\6a(n)+13a(n+1)+6a(n+2)\\4a(n)+12a(n+1)+9a(n+2)\\2a(n)+10a(n+1)+12a(n+2)+a(n+3)\\a(n)+12a(n+1)+10a(n+2)+2a(n+3)
\end{tabular}&152&1,5,25,125,393,689\\\hline
5&3&\begin{tabular}{c}
9a(n)+12a(n+1)+4a(n+2)\\6a(n)+13a(n+1)+6a(n+2)\\4a(n)+12a(n+1)+9a(n+2)\\2a(n)+10a(n+1)+12a(n+2)+a(n+3)\\a(n)+12a(n+1)+10a(n+2)+2a(n+3)
\end{tabular}&152&1,5,25,117,385,657\\\hline
5&4&\begin{tabular}{c}
15a(n)+10a(n+1)\\10a(n)+15a(n+1)\\6a(n)+18a(n+1)+a(n+2)\\3a(n)+19a(n+1)+3a(n+2)\\a(n)+18a(n+1)+6a(n+2)
\end{tabular}&72&1,5,25,101,169\\\hline
\end{tabular}
\end{center}
\caption{Recursions for $c+x+x^2$}
\label{tbl:cxx2recur}
\end{table}

We see that for $p>2$, $a_{c+x+x^2,p}(n)=a_{1+x,p}(n)$ if $c=\frac14 \pmod{p}$ because $c+x+x^2=(1+x/2)^2$. Furthermore, we arrive at Conjecture 
\ref{conj:cxx2similar}.

\begin{conjecture}
\label{conj:cxx2similar}
For $c\neq\frac14\pmod{5}$, the recursion for $a_{1+x+x^2,p}(n)$ is independent of $c$. Only the initial terms of the recursion depend on $c$.
\end{conjecture}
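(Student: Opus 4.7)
The plan is to extend the structural argument from the proof of Theorem \ref{thm:general1x} to the polynomial $c+x+x^2$. By Corollary \ref{cor:zerospace}, row $pk$ has the sparse form $b_1\,0\cdots0\,b_2\,0\cdots0\,b_n$ with $p-1$ zeros between consecutive $b_m$, where $b_m$ is the coefficient of $x^{m-1}$ in $(c+x+x^2)^k$. Row $pk+j$ for $0\le j<p$ is obtained by multiplying by $(c+x+x^2)^j$; its coefficient at position $i$ is $\sum_m \gamma_{i,m}(c,j)\,b_m$, where $\gamma_{i,m}(c,j)$ is the coefficient of $x^{i-p(m-1)}$ in $(c+x+x^2)^j\bmod p$. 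Since $\deg(c+x+x^2)^j = 2j < 2p$, only a few consecutive $b_m$ contribute to any given position, so each length-$(pn+k)$ window in row $pk+j$ is a linear image of a window of $n+1$, $n+2$, or $n+3$ consecutive $b_m$.

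The first step is to tabulate the coefficients of $(c+x+x^2)^j\bmod p$ for $j=0,\ldots,p-1$, producing a grid of block forms in analogy with Table \ref{tbl:epictable}. The next step is to count, for each form, how many distinct blocks arise as the window of $b_m$ ranges over the accessible blocks of row $k$. Summed across all forms and shifts, and corrected for overcounting of blocks appearing under multiple forms, the result should be a recursion
\[
a_{c+x+x^2,p}(pn+k) = \sum_\ell \alpha_{k,\ell}\,a_{c+x+x^2,p}(n+\ell) + C_k
\]
matching the entries in Table \ref{tbl:cxx2recur}.

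The main obstacle is showing that the $\alpha_{k,\ell}$ are genuinely independent of $c$ whenever $4c\not\equiv 1\pmod p$. It is not enough to show that $(c+x+x^2)^j$ has the same support for all such $c$: for $p=5$ and $c=2$ the $x^2$-coefficient of $(c+x+x^2)^2$ vanishes, yet the recursion agrees with the $c=1,3$ cases. What seems to matter is that the window-to-block map within each form is injective, and this injectivity is preserved as long as no ``run'' of $p-1$ or more consecutive vanishings in $(c+x+x^2)^j$ erases the information about some $b_m$. The degenerate case $4c\equiv 1$ produces exactly such runs via Lucas's theorem applied to $(x-1/2)^{2j}$ once $2j\ge p$. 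A promising approach is to work over $\mathbb{F}_{p^2}$, factor $c+x+x^2 = (x-\alpha)(x-\beta)$ with $\alpha\ne\beta$, and show that the minors of the coefficient matrices governing window-injectivity are polynomial multiples of powers of the discriminant $(\alpha-\beta)^2 = 1-4c$, so they vanish exactly on the degenerate locus. The correction terms $C_k$, which could in principle carry additional $c$-dependence from boundary contributions, form a secondary technical challenge that can likely be pinned down by matching the initial values in Table \ref{tbl:cxx^2data}.
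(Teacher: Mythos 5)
There is nothing in the paper to compare against here: the statement is left as a conjecture, supported only by the computations in Tables \ref{tbl:cxx^2data} and \ref{tbl:cxx2recur}, so the only question is whether your argument would actually close it. It does not. What you have written is a plan that correctly mirrors the method of Theorem \ref{thm:general1x} --- use Corollary \ref{cor:zerospace} to write row $pk+j$ as a convolution of $(c+x+x^2)^j$ against the spread-out coefficients $b_m$ of row $k$, observe that $\deg(c+x+x^2)^j<2p$ so each window sees at most a bounded number of consecutive $b_m$, and tabulate forms --- but the step that the conjecture actually asserts, namely that the resulting coefficients $\alpha_{k,\ell}$ do not depend on $c$ off the locus $4c\equiv 1$, is precisely the step you label ``the main obstacle'' and then only gesture at. You correctly observe that support-equality of $(c+x+x^2)^j$ across $c$ is not the right invariant (the $p=5$, $c=2$ example), and the proposed mechanism --- injectivity of the window-to-block maps controlled by minors that vanish only on the discriminant locus $1-4c=0$ --- is plausible and even suggestive, but it is stated as ``a promising approach,'' not carried out. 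No minor is computed, no injectivity is verified for any form, and it is not shown that the set of forms (as opposed to the linear maps within each form) is itself $c$-independent, which is a separate issue since which triples $(\gamma_{i,m})$ occur as rows of the convolution depends on which coefficients of $(c+x+x^2)^j$ vanish.

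Two further points would need attention even if the injectivity argument were completed. First, the counting in this style of proof requires not just injectivity within each form but an exact accounting of blocks realized by more than one form; in Theorem \ref{thm:general1x} this produces the constant $-(2p-1)(2p-2)$, and you defer the analogous constants $C_k$ to ``matching the initial values,'' which verifies nothing about their $c$-independence for general $n$. Second, the argument implicitly assumes that every accessible block of row $pk+j$ arises from an accessible block of $b_m$'s and that distinct accessible source windows give distinct images uniformly in $k$; for $\deg f>1$ the accessible-block structure of the $b_m$ rows is itself less transparent than in the $1+x$ case (note $a_{1+x+x^2,2}(4)=4<a_{1+x+x^2,2}(3)=8$ in Table \ref{tbl:cxx^2data}, so monotonicity and extension properties that are obvious for $1+x$ are not automatic here). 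As it stands the proposal is a credible research program, not a proof.
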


\subsection{Closed form for $a(n)$}
\label{sec:Closed}

\begin{theorem}
\label{thm:closedsimple}
$a_{1+x,2}(m)=m^2-m+2$.
\end{theorem}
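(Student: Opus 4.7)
The plan is to prove the identity $a_{1+x,2}(m) = m^2-m+2$ by strong induction on $m$, using the $p=2$ specialization of the Allouche--Berthe recursion reproduced in Theorem~\ref{thm:AB}. With $p=2$ the two branches (for $k=0,1$) of that recursion become
\[
a(2n+1) - a(2n) = 2\bigl(a(n+1) - a(n)\bigr), \qquad a(2n+2) - a(2n+1) = a(n+2) - a(n),
\]
valid whenever the left-hand argument is at least $3$, together with the initial data $a(1)=2$, $a(2)=4$, $a(3)=8$, each of which already matches $m^2-m+2$.

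First I would verify the three base cases $m=1,2,3$ by direct comparison with the listed initial values. For the inductive step, fix $m \ge 4$ and split on parity. Writing $m=2n+1$ (which forces $n \ge 2$) or $m=2n+2$ (which forces $n \ge 1$), every index on the right-hand side of the appropriate branch of the recursion is strictly less than $m$, so the inductive hypothesis applies. Substituting $a(j)=j^2-j+2$ on the right then reduces the claim to the polynomial identities $4n^2+2n+2 = (2n+1)^2-(2n+1)+2$ and $4n^2+6n+4 = (2n+2)^2-(2n+2)+2$, which are immediate.

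I do not expect any real obstacle: the whole proof is a direct verification that the quadratic $m^2-m+2$ is a fixed point of the $p=2$ recursion that also matches the prescribed initial values. The only mild issue is index bookkeeping — checking that each argument on the right-hand side is strictly less than $m$ so that the inductive hypothesis is available — and the related observation that the recursion only produces information about $a(m)$ for $m \ge 4$, which is precisely why the three small cases must be supplied as base cases rather than derived from the recursion.
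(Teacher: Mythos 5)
Your proof is correct and takes essentially the same approach as the paper: both arguments verify that the quadratic $m^2-m+2$ satisfies a recursion that, together with the small initial values, uniquely determines $a_{1+x,2}$. The only difference is that you use the difference form of the recursion from Theorem~\ref{thm:AB} while the paper substitutes into the $p=2$ specialization of Theorem~\ref{thm:general1x}; these are equivalent, and your version is somewhat more explicit about the base cases and the index bookkeeping in the induction.
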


\begin{proof}
Theorem \ref{thm:general1x} provides the recursion relation of $a_{1+x,2}(2n)=3a_{1+x,2}(n)+a_{1+x,2}(n+1)-6$ and $a_{1+x,2}(n)=a_{1+x,2}(n)+3a_{1+x,2}(n+1)$. 
We can find the starting points of $a_{1+x,2}(1)=2$ and $a_{1+x,2}(2)=4$ through inspection. 
This uniquely defines the sequence of accessible $m$-blocks. 
It is easy to show that the equation $a_{1+x}(m)=m^2-m+2$ satisfies both recursion relations through substitution, and also satisfies $a_{1+x,2}(1)=2$ and $a_{1+x,2}(2)=4$.  
\end{proof}

This matches Remark 5.14 of \cite{AB}.

\subsubsection{Generating Functions for a(n)}
\label{sec:Generating}

Using recursion relations, we can find the generating functions $g_{f(x),p}$ for $p\geq3$.

\begin{theorem}
\label{thm:gen1xp}
\begin{align*}g_{1+x,p}(z)=&\sum\limits_{n=0}^{\infty}a_{1+x,p}(n)z^n\\=&\frac{1}{(1-z)^3}\bigg(1+(p-3)z+(p^2-3p+3)z^2\\&+z^2\frac{(p-1)^2}{2}\sum\limits_{i\geq0}\Big(pz^{p^i}-2(p-1)z^{2 p^i}+(p-2)z^{3 p^i}\Big)\bigg).\end{align*}
\end{theorem}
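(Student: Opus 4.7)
The plan is to derive a functional equation relating $g(z):=g_{1+x,p}(z)$ and $g(z^p)$ from the recursion of Theorem~\ref{thm:general1x}, then iterate. Split $g(z)=\sum_{k=0}^{p-1} z^k G_k(z^p)$ where $G_k(w):=\sum_{n\ge 0} a_{1+x,p}(pn+k)\,w^n$. Substituting the recursion into each $G_k$ for $n\ge 1$, multiplying by $z^k$, and summing, the coefficient of $g(z^p)$ emerges as
\[
\Phi(z)=\sum_{k=0}^{p-1} z^k\!\left(\alpha_k+\beta_k\, z^{-p}+\gamma_k\, z^{-2p}\right),
\]
with $\alpha_k=\binom{p-k+1}{2}$, $\beta_k=\binom{p}{2}+kp+k-k^2$, $\gamma_k=\binom{k}{2}$; the remaining terms collect into $\sum_k a(k)z^k$, a correction from the shifts $a(n+1), a(n+2)$, and the geometric series from the constant $-(2p-1)(2p-2)$.

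The crucial step is the polynomial identity
\[
z^{2p}\Phi(z)\;=\;z^2\!\left(\tfrac{1-z^p}{1-z}\right)^{\!3},
\]
proved by expanding $(1-z^p)^3(1-z)^{-3}$ as $\sum_m\binom{m+2}{2}z^m-3\sum_m\binom{m-p+2}{2}z^m+\cdots$ and matching coefficients on the three blocks $\{0,\dots,p-1\}$, $\{p,\dots,2p-1\}$, $\{2p,\dots,3p-1\}$, where the values of the expansion reduce to $\gamma_k$, $\beta_{k-p}$, and $\alpha_{k-2p}$ respectively; the matching for the top block is precisely $\binom{2p+k}{2}-3\binom{p+k}{2}+3\binom{k}{2}=\binom{p-k+1}{2}$, a short quadratic identity. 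Consequently $(1-z)^3\Phi(z)=z^{-2(p-1)}(1-z^p)^3$, so setting $H(z):=(1-z)^3 g(z)$ the functional equation collapses to
\[
H(z) - z^{-2(p-1)}\,H(z^p) \;=\; R(z),
\]
where $R(z)$ is an explicit Laurent polynomial containing a single negative-power piece $-z^{-2(p-1)}\cdot H(z^p)|_{\text{initial}}$.

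I then verify the candidate from the theorem. Setting $H^*(z):=P(z)+\tfrac{(p-1)^2}{2}z^2\sum_{i\ge 0} S(z^{p^i})$ with $P(z)=1+(p-3)z+(p^2-3p+3)z^2$ and $S(z)=pz-2(p-1)z^2+(p-2)z^3$, peeling off the $i=0$ term and re-indexing the remainder gives
\[
H^*(z)-z^{-2(p-1)}H^*(z^p)\;=\;P(z)-z^{-2(p-1)}P(z^p)+\tfrac{(p-1)^2}{2}z^2 S(z),
\]
and a direct polynomial check confirms the right-hand side equals the $R(z)$ of the previous step. Since $H(0)=H^*(0)=1$ and the functional equation determines each formal-power-series coefficient uniquely (the coefficient $H_n$ depends only on $R_n$ and, when $p\mid n+2p-2$, on the strictly smaller coefficient $H_{(n+2p-2)/p}$), we conclude $H=H^*$ and hence $g(z)=H^*(z)/(1-z)^3$, as claimed.

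The principal obstacle is the polynomial identity $z^{2p}\Phi(z)=z^2((1-z^p)/(1-z))^3$: one must recognize $(\alpha_k,\beta_k,\gamma_k)$ as the trinomial coefficients of $z^2(1+z+\cdots+z^{p-1})^3$ packaged in three consecutive blocks, which is the algebraic fact that unlocks the entire computation. Once that identity is established, verifying that the Laurent polynomial $R(z)$ extracted from the recursion coincides with $P(z)-z^{-2(p-1)}P(z^p)+\tfrac{(p-1)^2}{2}z^2 S(z)$ is a routine but lengthy polynomial check that depends on the explicit initial values $a(0),\dots,a(p-1)$.
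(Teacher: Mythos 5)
Your argument follows essentially the same route as the paper: both hinge on recognizing the recursion coefficients $(\alpha_k,\beta_k,\gamma_k)$ as the coefficients of $z^2(1+z+\cdots+z^{p-1})^3$ grouped into three blocks (the paper writes the resulting functional equation directly as $h(z)=\frac{(1-z^p)^3}{(1-z)^3}h(z^p)+\cdots$ for the shifted series $h(z)=\sum a(n+2)z^n$), and both then solve by exploiting the substitution $z\mapsto z^p$. The only real difference is the last step: the paper's shift by $2$ makes $u(z)=(1-z)^3h(z)$ satisfy the clean telescoping relation $u(z)=u(z^p)+R(z)$, so the closed form is \emph{derived} by iteration, whereas you keep the factor $z^{-2(p-1)}$ and instead \emph{verify} the candidate and invoke uniqueness; both are legitimate. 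One small correction to your uniqueness argument: at $n=2$ the index $(n+2p-2)/p$ equals $2$, not something strictly smaller, so the coefficient of $z^2$ in your functional equation reads $H_2-H_2=R_2$ and leaves $H_2$ undetermined (consistently with the recursion only holding for $pn+k>2$). You therefore need to check separately that $H_2=H_2^*$, which follows from the initial value $a(2)=p^2$ giving $H_2=p^2-3p+3=H_2^*$; with that one extra line the induction closes and the proof is complete.
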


\begin{proof}
We have from Theorem \ref{thm:general1x} that for starting points 
$a(0)=1$, $a(1)=p$, and $a(2)=p^2$
the recursion relation is defined for $pn+k>2$ as
\begin{align*}a(pn+k)=&\frac{(p-k)(p-k+1)}{2}a(n)+(kp+k-k^2+\frac{p^2-p}{2})a(n+1)\\&+\frac{k^2-k}{2}a(n+2)-(2p-1)(2p-2).\end{align*} 
%Should be k with  k+2 I think
Adjusting for the $k=0,1$ cases by replacing $k$ with $n+2$ gives 
\begin{align*}a(pn+k+2)=&\frac{(p-k-2)(p-k-1)}{2}a(n)+(kp-3k-k^2-2+\frac{p^2+3p}{2})a(n+1)\\&+\frac{(k+1)(k+2)}{2}a(n+2)-(2p-1)(2p-2). \end{align*}

To adjust for the case when $p,k=0$, we define the recursion relation to have an additional term of $\frac{(p-2)(p-1)}{2}a(0)+\frac{(p-4)(p+1)}{2}a(1)-(2p-1)(2p-2)$ subtracted from the right hand side for only the case of $p,k=0$.

We multiply through by $z^{pn+k}$, then sum over $k=0$ to $p-1$, then $n=0$ to $\infty$. 
We also subtract from the right hand side of the sum the above mentioned additional term to account for the case of $p,k=0$. 
Defining $h(x)=\sum\limits_{n\geq0}a(n+2)z^n$, we get 
\begin{align*}h(z)=&(1+z+z^2+\ldots+z^{p-1})^3h(z^p)+\frac{1}{2(1-z)^3}\bigg(p^3z(1-z)^2+2p^2(1-z)(4-5z+2z^2)\\&+2(2-3z+3z^2-z^3-z^p)-p(12-19z+16z^2-5z^3-6z^p+2z^{2p})\bigg)\\&-\frac{(2p-1) (2p-2)}{1-z}.\end{align*}

Therefore $\displaystyle h(z)=\frac{(1-z^p)^3}{(1-z)^3}h(z^p)+Q(z)-(2p-1)(2p-2)\frac{1}{1-z}$ where \begin{align*} Q(z)=&\frac{1}{2(1-z)^3}\bigg( p^3z(1-z)^2+2p^2(1-z)(4-5z+2z^2)+2(2-3z+3z^2-z^3-z^p)\\&-p(12-19z+16z^2-5z^3-6z^p+2z^{2p})\bigg).\end{align*}

We then define $u(z)=(1-z)^3h(z)$ and $R(z)=Q(z)(1-z)^3-(2p-1)(2p-2)(1-z)^2$. Iteratively substituting gives
$u(z)=u(z^{p^\infty})+\sum\limits_{i\geq0}R(z^{p^i})=a(2)+\sum\limits_{i\geq0}R(z^{p^i})$, or
$h(z)=\frac{1}{(1-z)^3}\Big(a(2)+\sum\limits_{i\geq0}R(z^{p^i})\Big)$. 
Note that 
\begin{align*} \sum\limits_{i\geq0}R(z^{p^i})=&\sum\limits_{i\geq0}\frac12\bigg((p^3-2p^2-5p+2)z-2(p^3-3p^2+2p-1)z^2\\&+(p-2)(p-1)^2z^3+2(3p-1)z^p-2pz^{2p}\bigg)\\=& - \Big((3p-1)z-pz^2\Big)+\frac{(p-1)^2}{2}\sum\limits_{i\geq0}\Big(pz^{p^i}-2(p-1)z^{2 p^i}+(p-2)z^{3 p^i}\Big).\end{align*} 

Therefore
\begin{align*}
g(z)=&a(0)+a(1)z+z^2h(z)
\\=&1+pz+z^2\frac{p^2+\sum\limits_{i\geq0}R(z^{p^i})}{(1-z)^3}
\\[0.2em]=&\frac{1+(p-3)z+(p^2-3p+3)z^2+z^2\frac{(p-1)^2}{2}\sum\limits_{i\geq0}\Big(pz^{p^i}-2(p-1)z^{2 p^i}+(p-2)z^{3 p^i}\Big)}{(1-z)^3}.
\end{align*}
\end{proof}

\begin{example}
Setting $p=3$ in Theorem \ref{thm:gen1xp} and noting that the $z^{3p^i}$ further reduces when $p=3$ provides $$g_{1+x,3}(z)=\frac{1}{(1-z)^3} \Big(1+3z^2-2z^3+8z^2\sum\limits_{i=0}^\infty(z^{3^i}-z^{2\cdot3^i})\Big).$$
\end{example}

\begin{example}
Setting $p=5$ in Theorem \ref{thm:gen1xp} provides $$g_{1+x,5}(z)=\frac{1}{(1-z)^3} \Big(1+2z+13z^2+8z^2\sum\limits_{i=0}^\infty(5z^{5^i}-8z^{2\cdot 5^i}+3z^{3 \cdot 5^i})\Big).$$
\end{example}

We can use a similar proof to find further generating functions $g_{x),p}(z)$ from the recursion relations for $a_{f(x),p}(n)$.

\begin{theorem}
\label{thm:gen1xx22}
$$g_{1+x+x^2,2}(z)=\frac{1+2z^3+2z^5-z^6+\sum\limits_{i=0}^\infty(z^{2^i}-z^{3\cdot 2^i})}{(1-z^2)(1-z)^2}.$$
\end{theorem}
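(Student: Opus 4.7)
The plan is to mirror the proof strategy of Theorem \ref{thm:gen1xp} applied to the specific recursion relations for $f = 1+x+x^2$ and $p=2$ recorded in Table \ref{tbl:cxx2recur}. For this case the relations (valid once $n$ is past a small threshold) are
\begin{align*}
a(2n) &= 2a(n) + 2a(n+1) - 8, \\
a(2n+1) &= a(n) + 2a(n+1) + a(n+2) - 8,
\end{align*}
with initial data $a(0), \ldots, a(5) = 1, 2, 4, 8, 14, 25$ read off from Table \ref{tbl:cxx^2data}.

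The first step is to multiply the even-index recursion by $z^{2n}$ and the odd-index recursion by $z^{2n+1}$, sum each over $n$ beyond the threshold, and add. This produces a functional equation of the form
\[ g_{1+x+x^2,2}(z) = P(z)\, g_{1+x+x^2,2}(z^2) + R(z), \]
where $P(z)$ is a Laurent polynomial built from the combined factors $(2+z)$, $(2+2z)$, $z$ acting respectively on $a(m)$, $a(m+1)$, $a(m+2)$, and $R(z)$ is an explicit rational function absorbing the $-8$ corrections, the low-order initial data, and the compensations needed because the $a(m+1), a(m+2)$ shifts drop terms from the head of $g(z^2)$. Following the Theorem \ref{thm:gen1xp} template, I would then set $u(z) = (1-z^2)(1-z)^2\, g_{1+x+x^2,2}(z)$, which is the natural substitution matching the expected denominator in the conclusion, to recast the functional equation as $u(z) = \tilde{P}(z)\, u(z^2) + \tilde{R}(z)$ with $\tilde{P}(z)$ a polynomial.

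Iterating $z \mapsto z^2$ and using $u(z^{2^k}) \to u(0) = a(0) = 1$ then yields
\[ u(z) = 1 + \sum_{i \geq 0} \tilde{R}(z^{2^i}) \prod_{j < i} \tilde{P}(z^{2^j}), \]
and the key hope, dictated by the shape of the conjectured numerator, is that $\tilde{P}$ and $\tilde{R}$ conspire so that each iterated summand collapses to exactly $z^{2^i} - z^{3 \cdot 2^i}$, with all non-telescoping polynomial contributions combining into the constant $1 + 2z^3 + 2z^5 - z^6$. Dividing $u(z)$ back by $(1-z^2)(1-z)^2$ then yields the claimed closed form.

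The main obstacle is the bookkeeping: pinning down the precise threshold, tracking the shifts $a(m+1), a(m+2)$ that misalign with $g(z^2)$, and verifying that after the substitution $u = (1-z^2)(1-z)^2 g$ the many rational pieces of $\tilde{R}(z)$ simplify exactly to a multiple of $z - z^3$ plus a polynomial correction, so that the iterate matches the advertised sum. A useful final consistency check is to expand the conjectured generating function as a power series and verify agreement with the tabulated sequence $1, 2, 4, 8, 14, 25, 36, 53, 70, 92, 114$.
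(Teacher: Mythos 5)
Your overall strategy is the one the paper intends (the paper offers no written proof of Theorem \ref{thm:gen1xx22}, only the remark that the argument of Theorem \ref{thm:gen1xp} carries over), and your recursion, the constant $-8$, and the initial data are all read off correctly. However, there is a concrete gap at the substitution step. Summing $a(2n)z^{2n}$ and $a(2n+1)z^{2n+1}$ gives, as the coefficient of $g(z^2)$, the combination $(2+z)+(2+2z)z^{-2}+z\cdot z^{-4}=\frac{(1+z)^2(1+z^2)}{z^3}=\frac{r(z^2)}{z^3\,r(z)}$ with $r(z)=(1-z^2)(1-z)^2$; so after setting $u(z)=r(z)g(z)$ you get $u(z)=z^{-3}u(z^2)+\tilde R(z)$, and $\tilde P(z)=z^{-3}$ is \emph{not} a polynomial. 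The naive iteration then reads $u(z)=z^{-3(2^k-1)}u(z^{2^k})+\sum_{i<k}z^{-3(2^i-1)}\tilde R(z^{2^i})$, whose first term does not tend to $u(0)=1$ coefficientwise (for instance the $2w^3$ term of $u(w)$ contributes $2z^3$ at every stage). The fix is the same device used in the proof of Theorem \ref{thm:gen1xp}: work with the shifted series $h(z)=\sum_{n\ge0}a(n+3)z^n$ (shift by $3$, not $2$, because the odd-index relation reaches $a(n+2)$), for which the coefficient of $h(z^2)$ is exactly $(1+z)^2(1+z^2)=r(z^2)/r(z)$, so the telescoping for $w(z)=r(z)h(z)$ goes through.

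Second, your proposed final consistency check is worth actually carrying out, because it fails for the statement as printed: the displayed formula gives coefficient $3$ at $z^1$, whereas $a(1)=2$. Running the corrected iteration yields $r(z)g(z)=1+2z^3+2z^5-z^6+z^3\sum_{i\ge0}\bigl(z^{2^i}-z^{3\cdot2^i}\bigr)$, i.e.\ the infinite sum in the theorem should carry a factor of $z^3$ (equivalently, exponents $2^i+3$ and $3\cdot2^i+3$); with that correction the expansion matches $1,2,4,8,14,25,36,53,70,92,114,\dots$. So the collapse you hope for, to exactly $z^{2^i}-z^{3\cdot2^i}$, cannot occur as stated, but the analogous collapse to $z^{2^i+3}-z^{3\cdot2^i+3}$ does.
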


Based on the recursions in Table \ref{tbl:cxx2recur} and the method provided in Theorem \ref{thm:gen1xp}, we arrive at Conjecture \ref{conj:cxx2func}, which is confirmed for $p=3,5$.

\begin{conjecture}
\label{conj:cxx2func}
For $c\neq\frac14\pmod{p}$, the functional equation for the generating function $g_{c+x+x^2,p}(z)$ is $$g_{c+x+x^2,p}(z)=\frac{r(z^p)}{r(z)}g_{c+x+x^2,p}(z^p)-Q(z)-\frac{k}{1-z},$$ where $r(z)=(1-z^2)(1-z)^2$ and $Q(z)$ is some polynomial.
\end{conjecture}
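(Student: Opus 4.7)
The plan is to mirror the derivation of Theorem \ref{thm:gen1xp} in three stages: (i) derive a general recursion for $a_{c+x+x^2,p}(n)$ analogous to Theorem \ref{thm:general1x}; (ii) convert it into a functional equation for $g_{c+x+x^2,p}(z)$ via the same generating-function manipulation used in that proof; and (iii) verify that the resulting coefficient of $g(z^p)$ equals $r(z^p)/r(z)$ with $r(z) = (1-z^2)(1-z)^2$.

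For step (i), the central tool is Lemma \ref{lem:multipower} combined with Fermat's little theorem, which together give
\[
(c+x+x^2)^{pn+k} \equiv (c+x^p+x^{2p})^n (c+x+x^2)^k \pmod{p},
\]
so row $pn+k$ is the convolution of the $p$-stretched row $n$ (row $n$ with $p-1$ zeros inserted between entries) with row $k$. Writing down the analog of Table \ref{tbl:epictable} for $c+x+x^2$, enumerating the forms an accessible $m$-block inside such a convolved row can take, and subtracting for overcounted blocks should yield a recursion of the shape
\[
a_{c+x+x^2,p}(pn+k) = \sum_{j=0}^{s} \lambda_{k,j}\, a_{c+x+x^2,p}(n+j) + \mu_k,
\]
with $s$ bounded (Table \ref{tbl:cxx2recur} shows $s \le 3$ for the small primes examined). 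Conjecture \ref{conj:cxx2similar} further predicts the $\lambda_{k,j}$ are independent of $c \ne 1/4 \pmod p$; establishing this is itself part of the required work.

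For step (ii), multiply the recursion by $z^{pn+k}$, sum over $0 \le k \le p-1$ and $n \ge 0$, and add correction terms for the initial values not covered by the recursion, exactly as in the proof of Theorem \ref{thm:gen1xp}. This produces a functional equation of shape
\[
g_{c+x+x^2,p}(z) = P(z)\, g_{c+x+x^2,p}(z^p) - Q(z) - \frac{k}{1-z}, \qquad P(z) = \sum_{k=0}^{p-1}\sum_{j=0}^{s} \lambda_{k,j}\, z^{k+pj},
\]
up to absorbing polynomial shifts and initial terms into $Q(z)$. The content of Conjecture \ref{conj:cxx2func} is then the purely algebraic identity $P(z) = r(z^p)/r(z)$, verified by computing $P$ directly from the $\lambda_{k,j}$ of step (i). Iterating the resulting equation on $u(z) := r(z)\, g_{c+x+x^2,p}(z)$ exposes the infinite-sum structure already visible in Theorem \ref{thm:gen1xx22}.

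The principal obstacle is step (i) together with the factorization claim of step (iii). For $f = 1+x$ the block-form enumeration collapses into clean Pascal-like triangular patterns, but for $f = c+x+x^2$ one must track roughly twice as many sub-rows within each block of $p$ consecutive rows because of the degree jump, and the internal coefficients involve the trinomial expansion of $(c+x+x^2)^k$, whose structure depends on $c$. The appearance of the extra factor $1+z$ in $r(z) = (1-z)^3(1+z)$, beyond the $(1-z)^3$ of Theorem \ref{thm:gen1xp}, presumably reflects a period-$2$ phenomenon arising from the middle term $x$ in $c+x+x^2$; pinning this down rigorously is what separates the generic case $c \ne 1/4 \pmod p$ from the degenerate case $c = 1/4$, in which $c+x+x^2 = (1/2+x)^2$ reduces to $1+x$ and falls under Theorem \ref{thm:gen1xp} with $r(z) = (1-z)^3$.
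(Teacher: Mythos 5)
This statement is left as a conjecture in the paper precisely because the chain of reasoning you outline cannot currently be completed, and your proposal does not close the gap: it reduces the conjecture to two inputs that are themselves unproven. Step (i) --- a general, explicitly-coefficiented recursion for $a_{c+x+x^2,p}(pn+k)$ analogous to Theorem \ref{thm:general1x} --- is exactly what the paper does not have; Table \ref{tbl:cxx2recur} records such recursions only empirically for $p\in\{2,3,5\}$, and the $c$-independence of the coefficients is itself only Conjecture \ref{conj:cxx2similar}. Your step (iii), the identity $P(z)=r(z^p)/r(z)$, is then ``verified by computing $P$ directly from the $\lambda_{k,j}$ of step (i)'' --- but absent a general formula for the $\lambda_{k,j}$ this verification can only be performed prime by prime, which is precisely the status quo (the paper confirms the conjecture for $p=3,5$ by this very method). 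The block-enumeration for $c+x+x^2$ is genuinely harder than for $1+x$: the analog of Table \ref{tbl:epictable} must track convolutions with the full trinomial expansion of $(c+x+x^2)^k$, the overcount correction is no longer a single constant pattern, and no one has written down the closed form. You identify all of this candidly as ``the principal obstacle,'' which is the right diagnosis, but it means the proposal is a roadmap rather than a proof.

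Two smaller points. First, your heuristic for the factor $1+z$ in $r(z)=(1-z)^3(1+z)$ (a period-two phenomenon from the degree jump) is plausible and consistent with Theorem \ref{thm:gen1xx22}, and one can sanity-check the claimed factorization numerically: for $p=5$ the coefficient sum of $r(z^p)/r(z)$ is $p^3=125$, matching the total $\sum_{k,j}\lambda_{k,j}=5\cdot 25$ from Table \ref{tbl:cxx2recur}; but a consistency check is not a derivation. Second, your formula $P(z)=\sum_{k,j}\lambda_{k,j}z^{k+pj}$ has the wrong sign in the exponent: summing $a(pn+k)=\sum_j\lambda_{k,j}a(n+j)$ against $z^{pn+k}$ produces $\sum_{k,j}\lambda_{k,j}z^{k-pj}$ times $g(z^p)$ (plus boundary terms), so one must first re-index the recursion in terms of a shifted series $h(z)=\sum_n a(n+s)z^n$, as the paper does in the proof of Theorem \ref{thm:gen1xp}, before the coefficient of $h(z^p)$ becomes an honest polynomial. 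This is repairable, but as written the algebra in step (ii) does not go through.
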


\begin{conjecture}
\label{conj:mystery1}
For any $f(x)$ and $p$, the generating function $g_{f(x),p}(z)$ satisfies the equation $r(z)g_{f(x),p}(z)=r(z^p)g_{f(x),p}(z^p)+b(z)$ for some polynomials $r(z)$ and $b(z)$ depending on $f(x)$ and $p$.
\end{conjecture}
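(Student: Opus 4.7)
The plan is to generalize the approach of Theorem \ref{thm:gen1xp}, in three steps.

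First, I would establish a $p$-recurrence
\[a_{f,p}(pn+k) = \sum_{j=0}^{d} c_{k,j}\, a_{f,p}(n+j) + e_k, \qquad 0 \leq k < p,\ n \geq N_0,\]
extending Theorem \ref{thm:general1x} and the entries of Table \ref{tbl:cxx2recur} to arbitrary $f$ and $p$. This should follow from a combinatorial case analysis of how accessible blocks in rows $pk, pk+1, \ldots, pk+(p-1)$ are built from those in row $k$, using Corollary \ref{cor:zerospace} and the action of multiplication by $f$ as in Table \ref{tbl:epictable}. The key content is that the shift-depth $d$ is finite (bounded in terms of $\deg f$), so only finitely many translates of $a_{f,p}$ appear on the right.

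Second, I would translate the recurrence into a functional equation for the shifted generating function $h(z) = \sum_{n \geq 0} a_{f,p}(n+D)\, z^n$. Multiplying by $z^{pn+k}$ and summing over $n$ and $k$, exactly as in the proof of Theorem \ref{thm:gen1xp}, yields
\[h(z) = \frac{r(z^p)}{r(z)}\, h(z^p) + Q(z) - \frac{e}{1-z},\]
where $r(z)$ is a polynomial assembled from the denominators that appear when one reindexes $\sum_{n} a_{f,p}(n+j)\, z^{pn} = z^{-pj}\bigl(h(z^p) - [\text{initial terms}]\bigr)$. For $f = 1+x$ one has $r(z) = (1-z)^3$; Conjecture \ref{conj:cxx2func} furnishes $r(z) = (1-z^2)(1-z)^2$ for the quadratic family. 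Multiplying through by $r(z)$ and using $(1-z) \mid r(z)$ gives a polynomial identity $r(z) h(z) - r(z^p) h(z^p) = b_h(z)$.

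Third, I would pass from $h$ to $g$. Writing $g(z) = P_0(z) + z^D h(z)$ with $P_0$ a polynomial of degree less than $D$,
\[r(z) g(z) - r(z^p) g(z^p) = \bigl[r(z) P_0(z) - r(z^p) P_0(z^p)\bigr] + z^D b_h(z) + (z^D - z^{Dp})\, r(z^p)\, h(z^p).\]

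The main obstacle is the last term, $(z^D - z^{Dp})\, r(z^p)\, h(z^p)$. When $D > 0$ and $h$ is genuinely transcendental, this residual is not itself a polynomial; already for $f = 1+x$ and $p \geq 3$, a direct computation from Theorem \ref{thm:gen1xp} shows that $(1-z)^3 g(z) - (1-z^p)^3 g(z^p)$ still carries the transcendental sum appearing in the numerator of $g_{1+x,p}(z)$, multiplied by $\tfrac{(p-1)^2}{2}(z^2 - z^{2p})$. To salvage the conjecture one would need either to redo step two directly on $g$—summing from $n = 0$ and tracking every $O(1)$ boundary correction as an explicit term of $b(z)$—or to find a polynomial $r$ whose shape absorbs the offending factor. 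If neither is possible, the analysis still produces the weaker identity $r_1(z) g(z) - r_2(z) g(z^p) = b(z)$ with two unrelated polynomials $r_1, r_2$, and this is what I would expect to be provable in full generality.
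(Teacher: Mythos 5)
First, be aware that the paper offers no proof of Conjecture \ref{conj:mystery1}: it is stated as a conjecture and listed in Section \ref{sec:Conclusion} as a direction for future research, so your proposal must stand on its own. Judged that way, the decisive gap is in your step one. The existence of a finite-depth recurrence $a_{f,p}(pn+k)=\sum_{j}c_{k,j}\,a_{f,p}(n+j)+e_k$ for arbitrary $f$ and $p$ is precisely the open content of the problem: the paper establishes such a recurrence only for $f=1+x$ (Theorem \ref{thm:general1x}), records the recursions for $c+x+x^2$ only as what they ``appear to be'' (Table \ref{tbl:cxx2recur}, Conjecture \ref{conj:cxx2similar}), and proves nothing for general $f$. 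Saying the recurrence ``should follow from a combinatorial case analysis'' with bounded shift-depth restates the conjecture rather than proving it; for general $f$ one must show that accessible blocks of rows $pn,\dots,pn+p-1$ decompose over accessible blocks of only the rows $n,\dots,n+d$ with $d$ bounded in terms of $\deg f$, and also control the overcounting constant, and neither is carried out here. Steps two and three are routine once step one is granted, so the proposal as written proves nothing beyond the cases the paper already treats.

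Second---and this is to your credit---the obstacle you isolate in step three is real, and it cuts deeper than you state. From Theorem \ref{thm:gen1xp}, $(1-z)^3g_{1+x,p}(z)=N(z)+\tfrac{(p-1)^2}{2}z^2S(z)$ where $N$ is a polynomial, $S(z)=\sum_{i\ge0}\big(pz^{p^i}-2(p-1)z^{2p^i}+(p-2)z^{3p^i}\big)$, and $S(z)-S(z^p)$ is a polynomial. For any candidate polynomial $r$, set $\phi(z)=r(z)z^2/(1-z)^3$; then the non-rational part of $r(z)g(z)-r(z^p)g(z^p)$ is $\tfrac{(p-1)^2}{2}\big(\phi(z)-\phi(z^p)\big)S(z^p)$ up to a rational function. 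Since $S(z^p)$ is not a rational function (its coefficient support is infinite of density zero), rationality forces $\phi(z)=\phi(z^p)$, hence $\phi\equiv\phi(0)=0$, hence $r\equiv0$. So for $f=1+x$ and $p\ge3$ the conjecture is \emph{false as literally stated}; it holds for the shifted series $h(z)=\sum_{n\ge0}a(n+2)z^n$ with $r(z)=(1-z)^3$, and it holds for $g$ itself in your proposed asymmetric form $r_1(z)g(z)=r_2(z)g(z^p)+b(z)$, e.g.\ with $r_1(z)=z^{2p-2}(1-z)^3$ and $r_2(z)=(1-z^p)^3$. The correct move is therefore not to salvage the symmetric equation but to reformulate the conjecture in one of these two ways; the reformulated statement still rests entirely on the unproved step one.
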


\subsection{Limits of $\displaystyle\frac{a(n)}{n^2}$}
\label{sec:Limits}

Using the generating functions, we can find the asymptotic behavior of $a(n)$ as $n$ goes to infinity. Inspired by the quadratic nature of Theorem \ref{thm:closedsimple}, we examine the behavior of $\frac{a(n)}{n^2}$.

\begin{theorem}
\label{thm:limit1xp}
For $f(x)=1+x$ and any prime $p\ge3$,
$$\lim_{n\to\infty}\frac{a_{1+x,p}(n)}{n^2}=\begin{cases}
\displaystyle\frac{p^2(p-5)(p-1)}{2(p+1)}\Big(x+\frac{p+1}{p(p-5)}\Big)^2+\frac{(p-1)(p^2-7p+4)}{2(p-5)}&\frac1p\leq x \leq\frac13
\\[1em]\displaystyle\frac{-(p-1)(7p^3-8p^2-9p+18)}{4(p+1)}\Big(x-\frac{(p+1)(3p^2-7p+6)}{7p^3-8p^2-9p+18}\Big)^2
\\[0.5em]\displaystyle+\frac{(p-1)(p^5+5p^4-8p^3-15p^2+39p-18)}{2(7p^3-8p^2-9p+18)}&\frac13\leq x \leq\frac12
\\[1em]\displaystyle\frac{(p-2)(p-1)(p^2+2p+5)}{4(p+1)}\Big(x-\frac{(p+1)^2}{p^2+2p+5}\Big)^2
\\[0.5em]\displaystyle+\frac{(p-1)(p^3+4p^2+3p-4)}{2(p^2+2p+5)}&\frac12\leq x \leq1
\end{cases}$$
where $\displaystyle n=\big\lfloor\frac{p^k}{x}\big\rfloor$ and the limit as $n\to\infty$ is with constant $x$ and $k\to\infty$.
\end{theorem}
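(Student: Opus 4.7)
The plan is to extract an explicit formula for $a_{1+x,p}(n)$ from the generating function $g_{1+x,p}(z)$ of Theorem \ref{thm:gen1xp}, then let $n\to\infty$ along the family $n=\lfloor p^k/x\rfloor$ with $1/p\le x\le 1$ and identify the surviving terms. Using $(1-z)^{-3}=\sum_{n\ge 0}\binom{n+2}{2}z^n$ on each monomial of the numerator of $g_{1+x,p}(z)$ yields, for all $n\ge 2$,
\begin{align*}
a_{1+x,p}(n) =\ & \binom{n+2}{2}+(p-3)\binom{n+1}{2}+(p^2-3p+3)\binom{n}{2} \\
&+ \frac{(p-1)^2}{2}\sum_{i\ge 0}\left[p\binom{n-p^i}{2}-2(p-1)\binom{n-2p^i}{2}+(p-2)\binom{n-3p^i}{2}\right],
\end{align*}
where $\binom{s}{2}:=0$ for $s<2$, so the sum over $i$ is effectively finite.

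Next I would divide by $n^2$ and pass to the limit. For fixed $j\in\mathbb Z$, putting $i=j+k$ and $n\sim p^k/x$ gives $\frac{1}{n^2}\binom{n-mp^i}{2}\to\tfrac12(1-mxp^j)_+^2$ for each $m\in\{1,2,3\}$, where $(\cdot)_+$ denotes the positive part; the floor in $n=\lfloor p^k/x\rfloor$ contributes only an $O(1/n)$ error. Setting
\[
\Phi_x(y)=p(1-xy)_+^2-2(p-1)(1-2xy)_+^2+(p-2)(1-3xy)_+^2,
\]
the three leading binomial terms contribute $\tfrac{(p-1)^2}{2}$ in the limit, and after the shift $j=i-k$ the sum over $i$ becomes a bilateral sum $\sum_{j\in\mathbb Z}\Phi_x(p^j)$. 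The key observation for convergence at $j\to-\infty$ is that once $3xp^j\le 1$ the constant terms in the three squared brackets cancel and
\[
\Phi_x(y)=4xy+(2p-10)x^2y^2,
\]
so the tail is controlled by two convergent geometric series. Altogether,
\[
\lim_{n\to\infty}\frac{a_{1+x,p}(n)}{n^2}=\frac{(p-1)^2}{2}+\frac{(p-1)^2}{4}\sum_{j\in\mathbb Z}\Phi_x(p^j).
\]

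For $x\in[1/p,1]$ and $p\ge 3$ one checks that no $j\ge 1$ contributes (since $xp^j\ge 1$ makes all three indicators vanish) and that at every $j\le -1$ all three indicators are active (as $3x/p\le 3/p\le 1$), so the tail over $j\le -1$ equals $\tfrac{4x}{p-1}+\tfrac{(2p-10)x^2}{p^2-1}$. The three regimes in the theorem correspond to how many indicators remain on at $j=0$. For $1/p\le x\le 1/3$ all three are on, giving $\Phi_x(1)=4x+(2p-10)x^2$ and a single closed geometric sum over $j\le 0$; for $1/3\le x\le 1/2$ only the first two are on, giving $\Phi_x(1)=p(1-x)^2-2(p-1)(1-2x)^2$; for $1/2\le x\le 1$ only the first is on, giving $\Phi_x(1)=p(1-x)^2$. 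In each regime, adding $\tfrac{(p-1)^2}{4}\Phi_x(1)$ to the tail contribution and to $\tfrac{(p-1)^2}{2}$ produces an explicit quadratic in $x$ with coefficients rational in $p$; matching this against the claimed vertex form is a purely mechanical computation. As a consistency check, in the last regime the constant term reduces via the factorization $p^4+3p^3+5p^2+p-10=(p^2+2p+5)(p+2)(p-1)$ to $\tfrac{(p-1)^2(p+2)}{4}$, which equals $\tfrac{(p-1)^2}{2}+\tfrac{p(p-1)^2}{4}$ as required.

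The main obstacle is bookkeeping rather than any substantive difficulty: one must track which of the three squared indicator terms switch off as $x$ crosses the thresholds $1/3$ and $1/2$, and then perform the straightforward but lengthy polynomial algebra to rewrite each resulting quadratic in the vertex form stated in the theorem. A minor technical point is justifying the exchange of limit and sum over $i$, which is immediate from the effective finiteness of the nonzero range of $i$ and the uniform $O(1/n)$ error from the floor in $n=\lfloor p^k/x\rfloor$.
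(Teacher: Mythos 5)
Your proposal is correct and follows essentially the same route as the paper: both extract the coefficient $a(n)$ from the generating function of Theorem \ref{thm:gen1xp} as a sum of terms $\binom{n-mp^i}{2}$ for $m\in\{1,2,3\}$, divide by $n^2$ with $n=\lfloor p^k/x\rfloor$, and obtain the three regimes from which of the three families of terms is active at the top scale (the paper tracks this via the upper limits $p^i\le n$, $2p^i\le n$, $3p^i\le n$ of finite geometric sums, while you reindex $j=i-k$ and sum a bilateral series whose convergence rests on the cancellation $p-2(p-1)+(p-2)=0$). The resulting quadratics agree with the stated vertex forms.
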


\begin{remark}
The first polynomial from Theorem \ref{thm:limit1xp} corresponding to $\frac1p\le x\le\frac13$ should be understood in the sense of the limit for $p=5$ as we divide by $(p-5)$. In this case the polynomial is not quadratic but actually the linear polynomial $20x+8$.
\end{remark}

\begin{proof}
Theorem \ref{thm:gen1xp} states that \begin{align*}g(z)=&\sum\limits_{n\geq0}a_{1+x,p}(n)z^n\\=&\frac{1}{(1-z)^3}\bigg(1+(p-3)z+(p^2-3p+3)z^2\\&+z^2\frac{(p-1)^2}{2}\sum\limits_{i\geq0}\Big(pz^{p^i}-2(p-1)z^{2 p^i}+(p-2)z^{3 p^i}\Big)\bigg).\end{align*}
Let $\displaystyle\sum\limits_{n\geq0}b(n)z^n=\frac{z^2}{(1-z)^3}\sum\limits_{i\geq0}\Big(pz^{p^i}-2(p-1)z^{2 p^i}+(p-2)z^{3 p^i}\Big)$.
\vspace{0.1in}

Therefore, with the limit of $n=\lfloor\frac{p^k}{x}\rfloor\to\infty$ taken with fixed $x$ and $k\to\infty$, we have
\begin{align*}
\sum\limits_{n\geq0}a(n)z^n=&\frac{1+(p-3)z+(p^2-3p+3)z^2}{(1-z)^3}+\sum\limits_{n\geq0}\frac{(p-1)^2}{2}b(n)z^n\\[0.1em]
\lim_{n\to\infty}a(n)=&\lim_{n\to\infty}\left(\frac{(p-1)^2n^2}{2}+\frac{(p^2-1)n}{2}+p+\frac{(p-1)^2}{2}b(n)\right)\\[0.1em]
\lim_{n\to\infty}\frac{a(n)}{n^2}=&\frac{(p-1)^2}{2}+\frac{(p-1)^2}{2}\lim_{n\to\infty}\frac{b(n)}{n^2}.
\end{align*}

Therefore, because they act similarly, we can find the asymptotics of $\frac{a(n)}{n^2}$ by understanding the behavior $\frac{b(n)}{n^2}$. 
We can rewrite $\sum\limits_{n\geq0} b(n)z^n$ as \begin{align*}\sum_{n=0}^\infty \bigg(p\sum\limits_{i=0}^{p^i\leq n}\frac{(n-p^i)(n-p^i-1)}{2}-2(p-1)\sum\limits_{i=0}^{2p^i\leq n}\frac{(n-2p^i)(n-2p^i-1)}{2}\\+(p-2)\sum\limits_{i=0}^{3p^i\leq n}\frac{(n-2p^i)(n-2p^i-1)}{2}z^n\bigg).\end{align*} 
From this we see that \begin{align*}b(n)=&p\sum\limits_{i=0}^{p^i\leq n}\left(\frac{(n-p^i)(n-p^i-1)}{2}\right)-2(p-1)\sum\limits_{i=0}^{2p^i\leq n}\left(\frac{(n-2p^i)(n-2p^i-1)}{2}\right)\\&+(p-2)\sum\limits_{i=0}^{3p^i\leq n}\left(\frac{(n-2p^i)(n-2p^i-1)}{2}\right).\end{align*}
Therefore \begin{align*}\frac{b(n)}{n^2}=&\frac{p}{2}\sum\limits_{i=0}^{p^i\leq n}\left((1-\frac{p^i}{n})(1-\frac{p^i+1}{2})\right)-(p-1)\sum\limits_{i=0}^{2p^i\leq n}\left((1-\frac{2p^i}{n})(1-\frac{2p^i+1}{2})\right)\\&+\frac{p-2}{2}\sum\limits_{i=0}^{3p^i\leq n}\left((1-\frac{3p^i}{n})(1-\frac{3p^i+1}{2})\right).\end{align*}
Let $n=\lfloor\frac{p^k}{x}\rfloor$.
We can neglect the 1 in the second factor (it creates a change that goes to zero as $k\to\infty$), so we get $$\frac{b(n)}{n^2}=\frac{p}{2}\sum\limits_{i=0}^{p^i\leq n}(1-\frac{p^i}{n})^2-(p-1)\sum\limits_{i=0}^{2p^i\leq n}(1-\frac{2p^i}{n})^2+\frac{p-2}{2}\sum\limits_{i=0}^{3p^i\leq n}(1-\frac{3p^i}{n})^2.$$
Note that if $x \not\in [\frac13,1]$ then there is $m\in\mathbb{Z}$ such that $p^mx\in[\frac1p,1]$, so we can assume $\frac1p\leq x \leq 1$.
Ignoring the floor for simplicity, we set $n=\frac{p^k}{x}$.
Therefore we get \begin{align*}\frac{b(\frac{p^k}{x})}{(\frac{p^k}{x})^2}=\frac{p}{2}\sum\limits_{i=0}^{p^i\leq \frac{p^k}{x}}(1-p^{i-k}x)^2-(p-1)\sum\limits_{i=0}^{p^i\leq \frac{p^k}{2x}}(1-2p^{i-k}x)^2+\frac{p-2}{2}\sum\limits_{i=0}^{p^i\leq \frac{p^k}{3x}}(1-3p^{i-k}x)^2.\end
{align*}

When examining the upper limits of the three sums, we find that we therefore have 3 cases: $\frac1p\leq x \leq \frac13, \frac13\leq x \leq\frac12, \frac12\leq x \leq 1$. 
For the first sum, $p^i\leq \frac{p^k}{x}$ gives $i\leq k+1$ for $x=\frac1p$, and $i\leq k$ for $x=\frac13,\frac12,1$. 
For the second sum, $p^i\leq \frac{p^k}{2x}$ gives $i\leq k$ for $x=\frac1p,\frac12,\frac13$ and $i\leq k-1$ for $x=1$. 
For the third sum, $p^i\leq \frac{p^k}{3x}$ gives $i\leq k$ for $x=\frac1p,\frac13$ and $i\leq k-1$ for $x=\frac12,1$. 
Note that the limit is taken along the subsequences of the form $\lfloor\frac{p^k}{x}\rfloor$ with fixed $x$ and $k\to\infty$. 
Also note that the limiting function does not change if $x$ is replaced by $p\cdot x$.

For the first case of $\frac1p\leq x \leq\frac13$, we find that
\begin{align*}
\frac{b(\frac{p^k}{x})}{(\frac{p^k}{x})^2}=&\frac{p}{2}\sum\limits_{i=0}^{k}(1-p^{i-k}x)^2-(p-1)\sum\limits_{i=0}^{k}(1-2p^{i-k}x)^2+\frac{p-2}{2}\sum\limits_{i=0}^{k}(1-3p^{i-k}x)^2
\\=&(p-5)\frac{p^2-\frac{1}{p^{2k}}}{p^2-1}x^2+2\frac{p-\frac{1}{p^k}}{p-1}x
\\[0.3em]\lim_{n\to\infty}\frac{b(n)}{n^2}=&\lim_{k\to\infty}\left((p-5)\frac{p^2-\frac{1}{p^{2k}}}{p^2-1}x^2+2\frac{p-\frac{1}{p^k}}{p-1}x\right)
\\[0.3em]\lim_{n\to\infty}\frac{a(n)}{n^2}=&\frac{p^2(p-5)(p-1)}{2(p+1)}\Big(x+\frac{p+1}{p(p-5)}\Big)^2+\frac{(p-1)(p^2-7p+4)}{2(p-5)}
\end{align*}

For the case of $\frac13\leq x \leq\frac12$ we similarly find that because $$\frac{b(\frac{p^k}{x})}{(\frac{p^k}{x})^2}=\frac{p}{2}\sum\limits_{i=0}^{k}(1-p^{i-k}x)^2-(p-1)\sum\limits_{i=0}^{k}(1-2p^{i-k}x)^2+\frac{p-2}{2}\sum\limits_{i=0}^{k-1}(1-3p^{i-k}x)^2,$$
the limit of \begin{align*}\lim_{n\to\infty}\frac{a(n)}{n^2}=&\frac{-(p-1)(7p^3-8p^2-9p+18)}{4(p+1)} \bigg(x-\frac{(p+1)(3p^2-7p+6)}{7p^3-8p^2-9p+18}\bigg)^2-\frac{(p-4)(p-1)^2}{4}\\&+\frac{(p+1)(p-1)(3p^2-7p+6)^2}{4(7p^3-8p^2-9p+18)}.\end{align*}
Similarly for the case of $\frac12\leq x \leq1$ we find that because $$\frac{b(\frac{p^k}{x})}{(\frac{p^k}{x})^2}=\frac{p}{2}\sum\limits_{i=0}^{k}(1-p^{i-k}x)^2-(p-1)\sum\limits_{i=0}^{k-1}(1-2p^{i-k}x)^2+\frac{p-2}{2}\sum\limits_{i=0}^{k-1}(1-3p^{i-k}x)^2,$$
one has \begin{align*}\lim_{n\to\infty}\frac{a(n)}{n^2}=\frac{(p-2)(p-1)(p^2+2p+5)}{4(p+1)}\Big(x-\frac{(p+1)^2}{p^2+2p+5}\Big)^2+\frac{(p-1)(p^3+4p^2+3p-4)}{2(p^2+2p+5)}.\end{align*}

\end{proof}

\begin{corollary}
\label{corr:asym1xp}
For the polynomial $1+x$ and $p\ge3$,  
\begin{align*}
\liminf_{n\to\infty}\frac{a_{1+x,p}(n)}{n^2}=&\frac{(p-1)(p^3+4p^2+3p-4)}{2(p^2+2p+5)}\\[0.5em]
\limsup_{n\to\infty}\frac{a_{1+x,p}(n)}{n^2}=&\frac{(p-1)(p^5+5p^4-8p^3-15p^2+39p-18)}{2(7p^3-8p^2-9p+18)}
\end{align*}
\end{corollary}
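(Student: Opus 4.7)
The plan is to identify $\liminf$ and $\limsup$ of $a_{1+x,p}(n)/n^2$ with the minimum and maximum of the piecewise quadratic limit function $F(x)$ from Theorem \ref{thm:limit1xp} on $[1/p,1]$, and then to optimize $F$ by computing the vertex of each of its three quadratic pieces.

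First I would justify that the set of limit points of $a_{1+x,p}(n)/n^2$ equals $F([1/p,1])$. For each large $n$, choose the unique $k$ with $p^k\le n<p^{k+1}$ and set $x_n=p^k/n\in(1/p,1]$, so that $n=\lfloor p^k/x_n\rfloor$. Given any sequence $n_j\to\infty$, Bolzano--Weierstrass extracts a subsequence with $x_{n_j}\to x\in[1/p,1]$; combining Theorem \ref{thm:limit1xp} with the continuity of $F$ at interior points and the identification $F(1/p)=F(1)$ (which follows from the remark in the proof that $F$ is invariant under $x\mapsto px$) then yields $a_{1+x,p}(n_j)/n_j^2\to F(x)$. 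Conversely each value $F(x)$ is already a genuine limit along $n_k=\lfloor p^k/x\rfloor$, so $\liminf=\min_{[1/p,1]}F$ and $\limsup=\max_{[1/p,1]}F$.

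Next I would optimize $F$ on each sub-interval. On $[1/2,1]$ the leading coefficient $(p-2)(p-1)(p^2+2p+5)/(4(p+1))$ is positive for $p\ge3$, so this piece is convex; its vertex $x^*_3=(p+1)^2/(p^2+2p+5)$ satisfies $1/2<x^*_3<1$, giving the local minimum $(p-1)(p^3+4p^2+3p-4)/(2(p^2+2p+5))$. On $[1/3,1/2]$ the leading coefficient $-(p-1)(7p^3-8p^2-9p+18)/(4(p+1))$ is negative, so this piece is concave; its vertex $x^*_2=(p+1)(3p^2-7p+6)/(7p^3-8p^2-9p+18)$ satisfies $1/3<x^*_2<1/2$, giving the local maximum $(p-1)(p^5+5p^4-8p^3-15p^2+39p-18)/(2(7p^3-8p^2-9p+18))$. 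On $[1/p,1/3]$ the piece is degenerate for $p=3$, linear and increasing for $p=5$ by the remark following the theorem, and for $p\ge7$ is convex with vertex at the negative abscissa $-(p+1)/(p(p-5))$; thus in every non-degenerate case this piece is monotone on $[1/p,1/3]$ and its extrema are attained at the endpoints.

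Finally I would conclude by continuity of $F$: the global minimum and maximum are selected from the two vertex values above together with the endpoint evaluations at $x=1/p$, $x=1/3$, and $x=1/2$. I would check by routine algebra that the vertex value on $[1/2,1]$ is strictly smaller than every other candidate and that the vertex value on $[1/3,1/2]$ is strictly larger than every other candidate, yielding the stated formulas. The principal obstacle is this final comparison: each inequality reduces to a low-degree polynomial sign check in $p\ge3$, as does the verification that $x^*_2$ and $x^*_3$ lie in the interiors of their sub-intervals, but assembling them into a clean uniform argument for all primes $p\ge3$ is the main bookkeeping task.
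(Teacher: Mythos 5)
Your proposal is correct and takes essentially the same route as the paper: the paper's entire proof is the single observation that the maximum of the limit function from Theorem \ref{thm:limit1xp} occurs at $x=\frac{3p^3-4p^2-p+6}{7p^3-8p^2-9p+18}$ (your $x^*_2$, since $(p+1)(3p^2-7p+6)=3p^3-4p^2-p+6$) and the minimum at $x=\frac{(p+1)^2}{p^2+2p+5}$ (your $x^*_3$). You simply supply the convexity/concavity analysis, the interior-vertex checks, the endpoint comparisons, and the identification of $\liminf$/$\limsup$ with the extrema of the limit function, all of which the paper leaves implicit.
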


\begin{proof}
The maximum of Theorem \ref{thm:limit1xp} is when $x=\displaystyle\frac{3p^3-4p^2-p+6}{7p^3-8p^2-9p+18}$ and the minimum is when $x=\displaystyle\frac{p^2+2p+1}{p^2+2p+5}$.
\end{proof}

We can also apply this to other $a_{f(x),p}(n)$. 

\begin{theorem}
\label{thm:limit1xx22}
For polynomial $1+x+x^2$ and prime $2$,  
$$\lim_{n\to\infty}\frac{a_{1+x+x^2,2}(n)}{n^2}=\begin{cases}
\displaystyle\frac54+\frac12x-\frac{5}{12}x^2&\frac12\leq x \leq\frac23\\[0.75em]
\displaystyle\frac32-\frac14x+\frac{7}{48}x^2&\frac23\leq x \leq1
\end{cases}$$
Furthermore, the upper and lower limits of $\frac{a_{1+x+x^2}(n)}{n^2}$ are $\displaystyle \frac{7}{5}$ and $\displaystyle\frac{39}{28}$ respectively.
\end{theorem}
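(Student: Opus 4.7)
The plan is to mirror the proof of Theorem \ref{thm:limit1xp}, substituting the generating function from Theorem \ref{thm:gen1xx22} in place of the one used there. Write
\[
g_{1+x+x^2,2}(z) = \frac{P(z) + S(z)}{(1-z^2)(1-z)^2}, \qquad P(z) = 1 + 2z^3 + 2z^5 - z^6, \qquad S(z) = \sum_{i\ge 0}\bigl(z^{2^i} - z^{3\cdot 2^i}\bigr).
\]
The first step is to compute the asymptotics of $d_n := [z^n]\frac{1}{(1-z^2)(1-z)^2}$. A partial fraction expansion around the triple pole at $z = 1$ shows $d_n = \frac{n^2}{4} + O(n)$, which is all that is needed when dividing by $n^2$. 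Then $[z^n]\frac{P(z)}{(1-z^2)(1-z)^2} = d_n + 2d_{n-3} + 2d_{n-5} - d_{n-6}$, and since $P(1) = 4$, this contributes $n^2 + O(n)$ to $a_{1+x+x^2,2}(n)$.

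The contribution of $S(z)$ equals $\sum_{2^i \le n} d_{n-2^i} - \sum_{3\cdot 2^i \le n} d_{n - 3\cdot 2^i}$. Setting $n = \lfloor 2^k/x\rfloor$ and letting $k \to \infty$ with $x \in [1/2, 1]$ fixed, the floor contributes an $O(1/n)$ error, and the two sums become
\[
\frac{n^2}{4}\left[\sum_{i=0}^{U_1}(1 - 2^{i-k}x)^2 - \sum_{i=0}^{U_2}(1 - 3\cdot 2^{i-k}x)^2\right] + O(n),
\]
where $U_1, U_2$ are the largest indices satisfying the respective constraints. Because the coefficients $+1$ (for $z^{2^i}$) and $-1$ (for $z^{3\cdot 2^i}$) sum to zero, the a priori divergent $\sum 1$ portions of the two sums combine into an integer depending only on $U_1 - U_2$, while the remaining geometric sums in $2^{i-k}$ and $4^{i-k}$ converge.

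The case split comes from the values of $U_1$ and $U_2$. On $[1/2, 1]$ the constraint $2^i \le 2^k/x$ gives $U_1 = k$, while $3\cdot 2^i \le 2^k/x$ gives $U_2 = k-1$ for $x \in [1/2, 2/3]$ and $U_2 = k-2$ for $x \in [2/3, 1]$. Evaluating $\sum 2^{i-k} \to 2$ and $\sum 4^{i-k} \to 4/3$ from the top of the summation range, then combining with the $n^2$ from $P$, gives the two quadratics stated. The main obstacle is careful bookkeeping: correctly tracking the $P$-contribution (which, in contrast to the $f = 1+x$ case, is not negligible and enters as the constant $1$) and handling the jump in $U_2$ at $x = 2/3$.

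For the extremal values, differentiating $\tfrac{5}{4} + \tfrac{x}{2} - \tfrac{5}{12}x^2$ gives a critical point $x = 3/5 \in [1/2, 2/3]$ with value $\tfrac{7}{5}$ (a maximum), and differentiating $\tfrac{3}{2} - \tfrac{x}{4} + \tfrac{7}{48}x^2$ gives a critical point $x = 6/7 \in [2/3, 1]$ with value $\tfrac{39}{28}$ (a minimum). Comparing with the endpoint values and checking continuity at the meeting point $x = 2/3$ (where both pieces equal $151/108$) confirms these as the global $\limsup$ and $\liminf$.
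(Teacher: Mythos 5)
Your proposal is correct and follows exactly the route the paper intends: the paper omits the details, stating only that the argument is ``similar to the proof of Theorem \ref{thm:limit1xp},'' and you carry out that analogous computation --- extracting $[z^n]$ from the generating function of Theorem \ref{thm:gen1xx22}, splitting at $x=2/3$ according to the upper summation limits, and optimizing the resulting quadratics. All the numerical checks (the two quadratics, the common value $151/108$ at $x=2/3$, and the extrema $7/5$ at $x=3/5$ and $39/28$ at $x=6/7$) come out as stated.
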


The proof of Theorem \ref{thm:limit1xx22} is similar to the proof of Theorem \ref{thm:limit1xp}.

Using the recursion relations, we computed the upper and lower limits of $\frac{a_{f(x),p}(n)}{n^2}$ for sufficiently large $n$ for several $f(x)$ and $p$, 
The oscillatory nature of this sequence for large $n$ stabilizing to a periodic function in $log(x)$ is illustrated by Figure \ref{fig:ratio}.
\begin{figure}[h]
\begin{center}
\subfigure[$1+x\pmod{3}$]{
\includegraphics[width=0.3\textwidth]{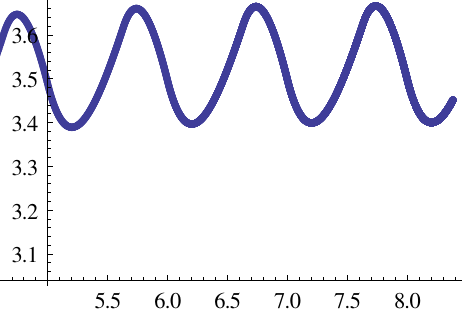}
}
\subfigure[$1+x\pmod{5}$]{
\includegraphics[width=0.3\textwidth]{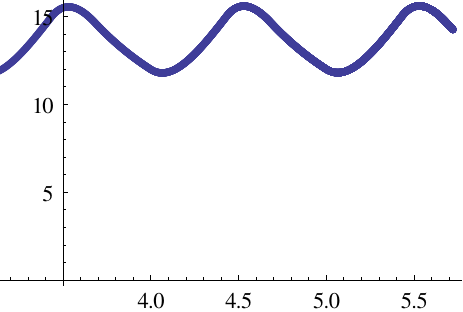}
}
\subfigure[$1+x+x^2\pmod{2}$]{
\includegraphics[width=0.3\textwidth]{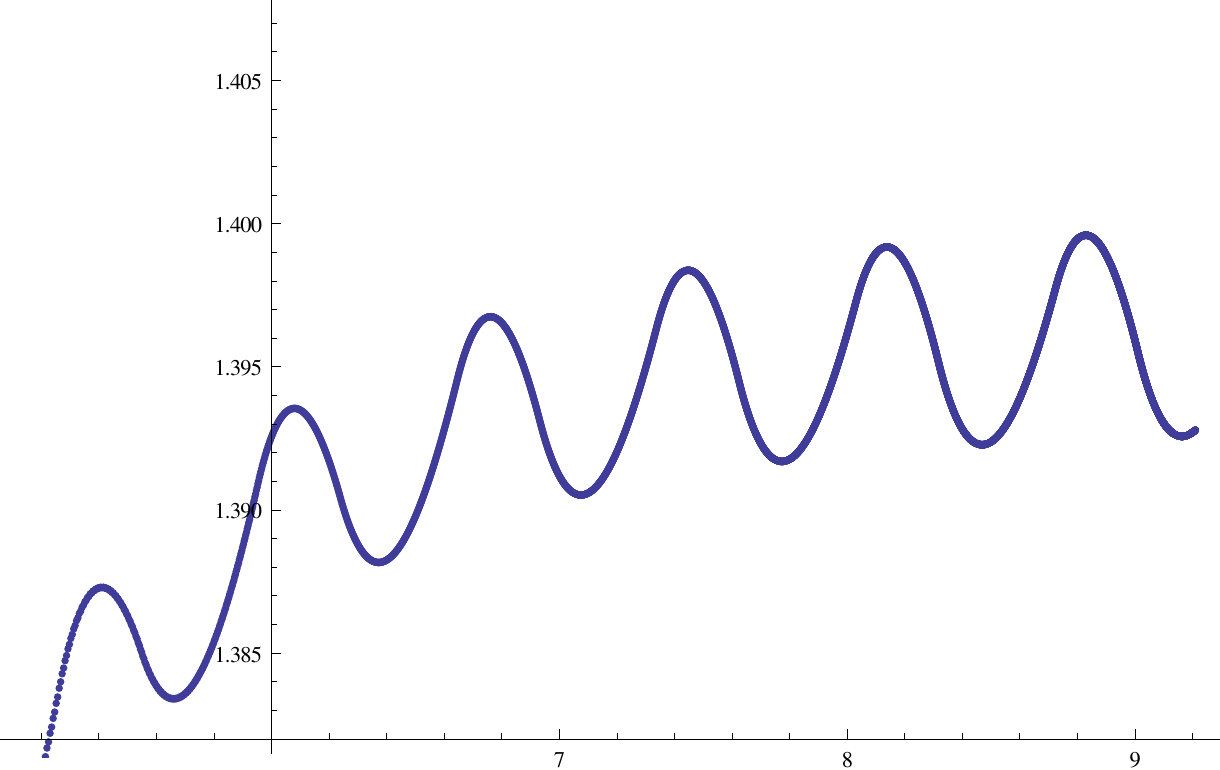}
}
\end{center}
\caption{$\displaystyle\frac{a_{f(x),p}(m)}{m^2}$ with the $x$ axis showing log$_pm$}
\label{fig:ratio}
\end{figure}

This matches a prior result expressed in Lemma 5.15 by \cite{AB}, which states that for large $n$, there exists constants $c_1$ and $c_2$ such that $c_1 n^2 \leq a(n) \leq c_2 n^2$. 
The limits given by Corollary \ref{corr:asym1xp} provide sharp values of $c_1$ and $c_2$. 
\footnote{Strictly speaking, fore these sharp values, we may not have $c_1n^2\le a(n)\le c_2n^2$, but for any $\delta>0$ we have $(c_1-\delta)n^2\le a(n)\le(c_2+\delta)n^2$ for large enough $n$.}

\section{Counting Coefficients}
\label{sec:Matrix}

\subsection{Definitions}
\label{sec:MatrixDef}

For a polynomial $f(x)$, prime $p$, and positive integer $\alpha \leq p-1$, we define $q_{f(x),p}(k,\alpha)$ to be the number of occurrences of $\alpha$ among the coefficients of $f(x)^k$ reduced modulo $p$. Similarly, we define $q_{f(x),p}(k)$ to be the total number of nonzero coefficients of $f(x)^k$. We then define $r_{f(x),p}(n,\alpha)=\sum\limits_{i=0}^{n-1}q_{f(x),p}(i,\alpha)$ and $r_{f(x),p}(n)=\sum\limits_{i=0}^{n-1}q_{f(x),p}(i)$. We search for a quick method for calculating both $q_{f(x),p}(k,\alpha)$ and the asymptotic behavior of $r_{f(x),p}(n,\alpha)$ for large $n$.

\subsection{Willson Method}
\label{sec:Willson}

Willson \cite{W1} describes an algorithm for computing the value of $r_{f(x),2}(n)$, which is provided in Theorem \ref{thm:Willson}.

\begin{theorem}[Willson's Method]
\label{thm:Willson}
For some polynomial $f(x)$ with maximum degree $d$, there exists a matrix $B$, row vector $u$, and column vector $v$ each of size $2^d-1$ such that $u \cdot B^k \cdot v=r_{f(x),2}(2^k)$. 
\end{theorem}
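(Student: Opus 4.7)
The plan is to prove the theorem by a finite-state decomposition of the polynomial triangle, exploiting the self-similarity $f(x)^{2n+\epsilon}\equiv f(x)^\epsilon f(x^2)^n\pmod 2$ from Lemma \ref{lem:multipower}. The states will be indexed by the $2^d-1$ nonzero $d$-blocks over $\{0,1\}$, which will give the required size.

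First I would set up the bookkeeping. For each $k\ge 0$, define a column vector $w_k\in\mathbb{Z}^{2^d-1}$ whose components $w_k[s]$ count the occurrences of the nonzero $d$-block $s$ in a canonical tiling of the rows $f^0,\ldots,f^{2^k-1}$ into consecutive length-$d$ blocks (padding each row on the right by zeros so that its length is a multiple of $d$). With this alignment every nonzero coefficient appears in exactly one tile, so
$$r_{f(x),2}(2^k)=\sum_{s\ne 0^d}|s|\,w_k[s]=u\cdot w_k,$$
where $|s|$ is the number of ones in $s$ and $u$ is the fixed row vector of weights $(|s|)_{s\ne 0^d}$.

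Next I would establish the recurrence $w_k=B\,w_{k-1}$ with $B$ a fixed $(2^d-1)\times(2^d-1)$ integer matrix. The doubling step produces rows $f^{2n}$ and $f^{2n+1}$ from row $f^n$ via two local operations: $f^{2n}$ is obtained by stretching row $n$ (inserting zeros between consecutive coefficients), and $f^{2n+1}$ is the convolution of this stretched row with $f$. Both operations are local of radius at most $d$, so each aligned length-$d$ tile of row $n$ contributes, together with a bounded neighborhood, a fixed finite multiset of length-$d$ tiles to rows $2n$ and $2n+1$. Aggregating these contributions linearly produces the transition matrix $B$, independent of $k$. Taking $v:=w_0$, the state vector of the single row $f^0=1$, one then obtains $w_k=B^k v$ by induction, and hence
$$r_{f(x),2}(2^k)=u\cdot w_k=u\cdot B^k\cdot v,$$
as claimed.

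The main obstacle is in the second step: verifying that the evolution of the aligned tile-count vector really is linear in a genuinely $k$-independent way, and that it truly fits in a $(2^d-1)$-dimensional state space rather than, say, a space of adjacent pairs of tiles. The central subtleties are that convolution with $f$ mixes adjacent tiles of row $2n$, so the contribution attributed to a single tile of row $n$ must be carefully apportioned between the new tiles it influences; and right-edge effects (where row $n$ ends and the padding begins) must be arranged so as not to introduce spurious $k$-dependence. Willson's insight is that, after a suitable choice of alignment and padding convention, the induced dynamics is linear on exactly the $(2^d-1)$-dimensional space of nonzero tiles, so that $u$, $B$, and $v$ of the stated size can be written down explicitly.
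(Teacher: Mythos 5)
The paper itself gives no proof of this statement: it is Willson's theorem, imported verbatim with the citation to \cite{W1}, so there is no in-paper argument to compare against and your sketch must stand on its own. Its skeleton is right (the self-similarity of Lemma \ref{lem:multipower}, a state space of the $2^d-1$ nonzero length-$d$ blocks, a transfer matrix for the doubling step), but the step you defer to ``Willson's insight'' is not a fixable subtlety of alignment --- as set up, it fails. If $w_k$ counts the tiles of a \emph{disjoint} partition of each row into consecutive length-$d$ blocks, then $w_{k+1}$ is not a linear function of $w_k$: a length-$d$ tile of row $2n+1=f(x)\cdot(\text{row }n)(x^2)$ is a convolution whose values depend on about $d+1$ consecutive coefficients of row $n$, and for infinitely many tile positions this dependence window straddles the boundary between two adjacent parent tiles, no matter how the tiling is aligned or padded. (Concretely, with tiles $[ld,(l+1)d)$ and $d$ even, the child tile of row $2n+1$ starting at $2d$ depends on parent positions $[d/2,3d/2)$.) The single-tile census therefore does not carry enough information to propagate itself; you would need at least the census of adjacent tile pairs, which lives in a space of size roughly $(2^d-1)^2$.

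The repair --- which is, in essence, what Willson actually does, and what the paper's Theorem \ref{thm:transcomp} is describing --- is to index the state by \emph{overlapping} length-$d$ windows, i.e., by the nonzero elements of $F_2[x]/x^d$, and to run the dependence backwards: each length-$d$ window of a child row is determined by a \emph{single} length-$d$ window of the parent row (at roughly half the position), and the four self-maps of Theorem \ref{thm:transcomp} record the four combinations of child-row parity and position parity. The overlapping-window count vector then genuinely satisfies $w_{k+1}=Bw_k$ with $B$ the sum of the four permutation-like matrices. But this forces a different $u$: with overlapping windows your weight vector $u=(|s|)_s$ would count each nonzero coefficient about $d$ times, so $u$ must instead be (for instance) the indicator of windows whose leading coefficient is nonzero, so that each nonzero entry is counted once via the unique window based at its position. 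As written, your $u$ is built for the disjoint tiling while the $B$ you need only exists for the overlapping-window state, and reconciling the two is the actual content of the theorem rather than a detail to be filled in.
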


Amdeberhan-Stanley \cite{ASt} describes a similar and related algorithm for calculating the number of each coefficient $\alpha$ for any power $k$ for general $f(x)$ and $p$, namely $q_{f(x),p}(k,\alpha)$. Willson also analyzed the case of $p>2$ in \cite{W2}.

\begin{example}
\renewcommand\arraystretch{.5}
For $1+x+x^2$ mod $2$, $B=\begin{bmatrix}2&0&2\\1&1&2\\1&1&0\end{bmatrix}$. Note that the largest eigenvalue of this matrix is $1+\sqrt{5}$.
\renewcommand\arraystretch{1}
\end{example}

\begin{theorem}
\label{thm:transcomp}
The matrix $B$ is the sum of four matrices, each of which corresponds to a self-mapping of the set $X=F_2[x]/x^d \setminus {0}$.
\end{theorem}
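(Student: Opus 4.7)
The plan is to unpack Willson's construction of $B$ from Theorem~\ref{thm:Willson} using Lemma~\ref{lem:multipower}, which for $p=2$ gives
$$c_{2i}(x)\equiv c_i(x^2), \qquad c_{2i+1}(x)\equiv f(x)\,c_i(x^2)\pmod 2,$$
where $c_i:=f^i$. The matrix $B$ propagates length-$d$ block-occurrence counts as one doubles the range of row indices, and each position in a doubled row falls into exactly one of four classes indexed by (row parity)$\times$(position parity). These four classes will give the four summands of the claimed decomposition.

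For each of the four parity pairs $(\varepsilon_{\mathrm{row}},\varepsilon_{\mathrm{pos}})\in\{0,1\}^2$, I would write down an explicit formula expressing the length-$d$ block at the corresponding new position as a function of the length-$d$ block at a canonical source position in row $c_i$. In the two row-even cases the formula is an interleaving of the source block with zeros (shifted appropriately by one); in the two row-odd cases one further convolves with the coefficients of $f$. These rules define four maps $\phi_1,\phi_2,\phi_3,\phi_4:X\to X$ on $X=\mathbb F_2[x]/x^d\setminus\{0\}$, and the matrix $B$ is then their sum in the sense $B = M_{\phi_1}+M_{\phi_2}+M_{\phi_3}+M_{\phi_4}$, where $(M_{\phi})_{b,b'}=1$ iff $\phi(b')=b$.

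The main obstacle is arranging the source-position assignments so that each $\phi_j$ genuinely maps $X$ to $X$. For the two row-odd maps this is automatic: multiplication by $f(x)$ modulo $x^d$ is an invertible $\mathbb F_2$-linear endomorphism whenever $f(0)\ne 0$, so $\phi_3$ and $\phi_4$ are bijections of $X$. For the two row-even maps, the correct assignment attaches new position $2\ell$ in row $c_{2i}$ to the source block at position $\ell$, and new position $2\ell+1$ to the source block at position $\ell+1$; with these choices, the surviving leading coefficient of the output equals the leading coefficient of the source, and the residual boundary cases (a nonzero source block whose surviving coefficients all vanish) are handled by interpreting those $\phi_j$ as partial self-maps contributing zero rows to $M_{\phi_j}$. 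The identity $B=\sum_{j=1}^4 M_{\phi_j}$ is then obtained by direct comparison of entries, verifiable explicitly on the example $f=1+x+x^2$.
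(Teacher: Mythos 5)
The paper offers no argument for this statement beyond the remark that it ``follows easily from Willson \cite{W1},'' so your reconstruction is effectively being compared against Willson's construction itself, and in outline you have the right one: the doubling identities $c_{2i}(x)=c_i(x^2)$, $c_{2i+1}(x)=f(x)c_i(x^2)$, the classification of cells of the doubled diagram by (row parity)$\times$(position parity), the observation that a length-$d$ block in row $2i+\varepsilon$ is determined by a length-$d$ window in row $i$ (the window has width exactly $d$ in the odd-row cases precisely because $\deg f=d$), and the identification of $B$ with the sum of the four resulting function matrices. This is also exactly the structure the paper relies on later, in the proof of Theorem~\ref{eigenlimit}, where the two even-row maps are identified as $g\mapsto g(x^2)$ and $g\mapsto xg(x^2)$ reduced modulo $x^d$.

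The one step that is wrong is the claim that the two odd-row maps are automatically bijections of $X$ because ``multiplication by $f(x)$ modulo $x^d$ is invertible when $f(0)\ne0$.'' Those maps are not multiplication by $f$ modulo $x^d$: writing $f(x)=f_e(x^2)+xf_o(x^2)$, the even and odd positions of row $2i+1$ carry the coefficients of $f_e\,c_i$ and $f_o\,c_i$ respectively, so each odd-row map interleaves windows of $f_ec_i$ and $f_oc_i$, and such a map can fail to be injective and can send a nonzero block to the zero block. Concretely, for $f=1+x^2$ (so $f(0)=1$, $f_o=0$, $d=2$) the odd-position, odd-row map sends the source block $(u,v)$ to $(0,u+v)$: it kills $1+x$ and identifies $1$ with $x$. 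The error is harmless for the theorem, since the statement only requires each of the four maps to be a (possibly partial) self-map of $X$, which is the interpretation you already adopt for the even-row maps; you should simply treat all four uniformly as linear maps $\mathbb{F}_2^d\to\mathbb{F}_2^d$ restricted to $X$, with a source block mapping to $0$ contributing a zero \emph{column} (not row, under your convention $(M_\phi)_{b,b'}=1$ iff $\phi(b')=b$) to $M_\phi$. This is moreover consistent with the paper's later use of the maps, where paths in the associated graph are explicitly allowed to ``lead to $0$.''
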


Theorem \ref{thm:transcomp} follows easily from Willson \cite{W1}.

\begin{remark}
The size of the matrix $B$ can be made smaller only by using accessible blocks, as explained in Wilson \cite{W1}.
\end{remark}

\subsection{Eigenvalue Analysis}
\label{sec:Eigen}

The matrix $B$ has nonnegative entries and is irreducible. Following Willson \cite{W1}, define $\lambda$ to be the Perron-Frobenius eigenvalue of $B$, i.e., the largest positive eigenvalue of $B$ (it exists by the Perron-Frobenius theorem).
We define $\lambda(f)$ to be the value of $\lambda$ for the polynomial $f(x)$. 
We can approximate the value of $r_{f(x),p}(p^k,\alpha)$ with $\lambda^k$ because the entries of $B^k$ grow as a constant times $\lambda^k$. 

\begin{example}
For $f(x)=1+x$ and $p=2$, $\lambda=3$ because $B=[3]$. In this case $\lambda$ corresponds exactly to the scaling of the number of nonzero coefficients when doubling the number of rows, namely $r_{1+x,2}(2k)=3\cdot r_{1+x,2}(k)$.
\end{example}

When examining the eigenvalues, we note that there are multiple transformations of a polynomial that does not change $\lambda$.

\begin{theorem}
\label{thm:eigenequiv}
We define the polynomials $f(x)$ and $g(x)$ to be similar if we can transform $f(x)$ into $g(x)$ through a combination of the transformations $f(cx) \text{ and } cf(x)$ with integer $1<c<p$, $x^cf(x)$  with integer $c>0$, $f(x^c)$ with integer $c>1$, $x^{deg(f)}f(x^{-1})$, and $f(x)^c$ with integer $c>1$. Any two similar polynomials have the same $\lambda$.
\end{theorem}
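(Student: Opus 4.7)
The plan is to verify invariance of $\lambda$ under each of the five listed operations in turn; closure under arbitrary compositions then follows. Since $\lambda(f)$ is the Perron-Frobenius eigenvalue of the Willson matrix $B$ and governs $r_{f,p}(p^k)\sim C\lambda(f)^k$, the claim reduces to checking how each operation affects the coefficient-count sequence $q_{f,p}(k)$ or at least its partial sums $r_{f,p}(N)$. The first four transformations will be shown to preserve the row-by-row nonzero coefficient counts outright, while the fifth (taking a $c$-th power) requires an asymptotic comparison.

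For the scalar and variable-substitution transformations, direct inspection of $(Tf)^k$ shows that the nonzero coefficient pattern of each row is preserved. Specifically, $(cf)^k = c^k f^k$ rescales each coefficient by the unit $c^k \bmod p$; in $f(cx)^k$, the coefficient of $x^i$ is $c^i$ times the corresponding coefficient of $f^k$, again a unit rescaling; $(x^c f)^k = x^{ck}f^k$ merely translates row $k$ horizontally; $f(x^c)^k$ spaces out the nonzero coefficients of $f^k$ with $c-1$ intervening zeros; and the reversal $x^{\deg f}f(x^{-1})$ reverses each row. In each case $q_{Tf,p}(k) = q_{f,p}(k)$ for every $k$, hence $r_{Tf,p}(N) = r_{f,p}(N)$ for every $N$, and so $\lambda(Tf) = \lambda(f)$.

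The subtle case is $Tf = f^c$, for which row $k$ of the new triangle is row $ck$ of the original, so
\begin{equation*}
r_{f^c,p}(N) \;=\; \sum_{k=0}^{N-1} q_{f,p}(ck).
\end{equation*}
I would first reduce to $\gcd(c,p) = 1$ by writing $c = p^a c'$: iterated application of Lemma \ref{lem:multipower} gives $f^c \equiv f(x^{p^a})^{c'} \pmod{p}$, which combines transformation~3 (already handled) with the coprime-$c$ case. When $\gcd(c,p) = 1$, monotonicity of $r_{f,p}$ together with the inclusion $\{ck : 0 \le k < p^n\} \subset [0,cp^n)$ yields
\begin{equation*}
r_{f^c,p}(p^n) \;\le\; r_{f,p}(cp^n) \;\le\; r_{f,p}\bigl(p^{\,n+\lceil\log_p c\rceil}\bigr) \;\sim\; C\,\lambda(f)^{\,n+\lceil\log_p c\rceil},
\end{equation*}
giving $\lambda(f^c) \le \lambda(f)$.

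The matching lower bound $\lambda(f^c) \ge \lambda(f)$ is the main obstacle. My plan is to invoke the interpretation $\beta = \log_p \lambda$ as the Hausdorff dimension of the self-similar set generated by $f$: the set for $f^c$ is obtained by sampling every $c$-th horizontal row of the set for $f$, a bi-Lipschitz linear rescaling that leaves the Hausdorff dimension invariant. A more self-contained alternative would be to decompose $r_{f,p}(cp^n) = \sum_{r=0}^{c-1}\sum_{k<p^n} q_{f,p}(ck+r)$, recognize each residue-class sum as the row-count of the shifted sequence $\{f^r(f^c)^k\}_{k\ge 0}$, and argue that multiplication by the fixed polynomial $f^r$ changes the nonzero count of each row only by bounded multiplicative factors; this would sandwich $r_{f^c,p}(p^n)$ between constant multiples of $\lambda(f)^n$ and close out the theorem. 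Making either route rigorous is the technical heart of the proof, since the fractal-dimension path treats a nontrivial invariance as a black box, while the matrix route requires unpacking Willson's construction in enough detail to track how raising $f$ to the $c$-th power acts on the state space and transition operator $B$.
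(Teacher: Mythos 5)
Your treatment of the first four transformations and of the inequality $\lambda(f^c)\le\lambda(f)$ matches the paper and is fine (the reduction to $\gcd(c,p)=1$ is harmless but unnecessary --- the argument below never uses coprimality). The one step you leave open, the lower bound $\lambda(f^c)\ge\lambda(f)$, is exactly where the paper's proof does its only real work, and your ``more self-contained alternative'' is in fact the paper's route. What you call the technical heart is closed by a one-line observation, not by unpacking Willson's state space: if $C$ is the number of nonzero coefficients of $f$, then $q_{f,p}(k+1)\le C\,q_{f,p}(k)$ for all $k$, since each nonzero term of $f^k$ contributes to at most $C$ coefficients of $f\cdot f^k$. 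Iterating gives $q_{f,p}(ck+r)\le C^r q_{f,p}(ck)$, hence
\begin{equation*}
r_{f,p}(cN)=\sum_{k=0}^{N-1}\sum_{r=0}^{c-1}q_{f,p}(ck+r)\le\bigl(1+C+\cdots+C^{c-1}\bigr)\sum_{k=0}^{N-1}q_{f,p}(ck)=\bigl(1+C+\cdots+C^{c-1}\bigr)\,r_{f^c,p}(N),
\end{equation*}
and taking $N=p^n$ together with $r_{f,p}(cp^n)\ge r_{f,p}(p^n)\sim\lambda(f)^n$ yields $\lambda(f)\le\lambda(f^c)$. This is precisely the sandwich you describe; the ``bounded multiplicative factor'' you hoped for only needs to go in one direction, which is the easy one.

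I would drop the Hausdorff-dimension route entirely. Sampling every $c$-th row of the limiting set is not a bi-Lipschitz self-map of that set, and the assertion that it preserves dimension is at least as hard as the statement being proved; as written it is a circular appeal to the conclusion. Stick with the counting argument, which is elementary and complete once the inequality $q_{f,p}(k+1)\le C\,q_{f,p}(k)$ is recorded.
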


\begin{proof}
Because the transformations $f(c \cdot x), f(x^c), x^c\cdot f(x), c \cdot f(x)$, and flipping a polynomial do not change the number of nonzero coefficients of a polynomial, $\lambda$ do not change. 
Furthermore, because $f(x)^c$ is every $c^{th}$ row, the ratios over the long term of the sums of total number of nonzero coefficients does not change, so $\lambda$ is the same. 
Namely, let $q_{f(x)}(n)$ be the number of nonzero coefficients of $f(x)^n$. 
Therefore $q_{f(x)}(n+1)\le C\cdot q_{f(x)}(n)$, where $C$ is the number of nonzero coefficients of $f(x)$. 
This means that \begin{align*}r_{f(x)}(k\cdot n)=\sum\limits_{j=0}^{k\cdot n -1} q_{f(x)}(j) \le \sum\limits_{j=0}^{n-1}(1+C+\ldots+C^{k-1})q_{f(x)}(j\cdot k) \le (1+C+\ldots+C^{k-1})r_{f(x)^k}(n). \end{align*}
This implies that $\lambda(f)\le\lambda(f^k)$. 
Similarly since $q_{f(x)}(j\cdot k -i) \ge C^{-i}q_{f(x)}(j\cdot k)$, we can show that $\lambda(f^k)\le\lambda(f)$. 
Therefore $\lambda(f)=\lambda(f^k)$.
\end{proof}

\subsubsection{Values of $\lambda$ where $p=2$}
We calculate $\lambda$ for polynomials with $p=2$. We also find the minimal polynomial of $\lambda$. Provided are $\lambda$ and the degree $d$ of its minimal polynomial for non-similar polynomials with degree of up to 6, although we had calculated for $deg(f)\le9$.

\begin{table}
\begin{center}
\begin{tabular}{|ccc|ccc|}
\hline
Polynomial					  &$ \lambda  $&$d  $&  Polynomial					  &$\lambda   $&$d  $\\\hline
$ 1+x 			 	   	$&$ 3		$&$1  $&$1+x+x^6					$&$ 3.45686 $&$20$\\
$ 1+x+x^2					$&$ 3.23607 	$&$2  $&$1+x+x^2+x^6				$&$ 3.49009 $&$20$\\
$ 1+x+x^3					$&$ 3.31142 	$&$4  $&$1+x+x^3+x^6				$&$ 3.50478 $&$10$\\
$ 1+x+x^4					$&$ 3.33159 	$&$5  $&$1+x^2+x^3+x^6				$&$ 3.53521 $&$20$\\
$ 1+x+x^2+x^4				$&$ 3.3788  	$&$7  $&$1+x+x^2+x^3+x^6			$&$ 3.53141 $&$19$\\
$ 1+x+x^3+x^4				$&$ 3.47662 	$&$4  $&$1+x+x^4+x^6				$&$ 3.50468 $&$17$\\
$ 1+x+x^2+x^3+x^4			$&$ 3.45729 	$&$4  $&$1+x+x^2+x^4+x^6			$&$ 3.55002 $&$19$\\
$ 1+x+x^5					$&$ 3.35174	$&$10$&$1+x+x^3+x^4+x^6			$&$ 3.59415 $&$16$\\
$ 1+x^2+x^5				$&$ 3.46127 	$&$12$&$1+x^2+x^3+x^4+x^6			$&$ 3.53665 $&$15$\\
$ 1+x+x^2+x^5				$&$ 3.49563 	$&$7  $&$1+x+x^2+x^3+x^4+x^6		$&$ 3.59043 $&$11$\\
$ 1+x+x^3+x^5				$&$ 3.45469 	$&$12$&$1+x+x^5+x^6				$&$ 3.54536 $&$14$\\
$ 1+x^2+x^3+x^5			$&$ 3.46639 	$&$5  $&$1+x+x^2+x^5+x^6			$&$ 3.50809 $&$18$\\
$ 1+x+x^2+x^3+x^5			$&$ 3.5229   	$&$14$&$1+x+x^2+x^3+x^5+x^6		$&$ 3.57066 $&$17$\\
$ 1+x+x^2+x^4+x^5			$&$ 3.47168 	$&$11$&$1+x+x^2+x^4+x^5+x^6		$&$ 3.49995 $&$6  $\\
$ 1+x+x^2+x^3+x^4+x^5		$&$ 3.52951 	$&$6  $&$1+x+x^2+x^3+x^4+x^5+x^6	$&$ 3.5598   $&$6  $\\
\hline
\end{tabular}
\caption{$\lambda$ and the degree of its minimal polynomial for $p=2$ and deg$(f(x)) \leq 6$}
\label{tbl:eigenvalues}
\end{center}
\end{table}
We see that $\lambda$ is between $3$ and $4$. We form several conjectures on the bounds of $\lambda$.

\begin{conjecture}
When $p=2$, $\lambda \geq 3$. Furthermore, $\lambda=3$ only for polynomials similar to $1+x$. If $p=2$ and $\lambda > 3$, then $\lambda \geq 1+\sqrt{5}$. Furthermore, $\lambda=1+\sqrt{5}$ only if $f(x)$ is similar to $1+x+x^2$.
\end{conjecture}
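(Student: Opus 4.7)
The plan is to exploit the explicit structure of Willson's matrix $B$ together with the symmetry reductions in Theorem~\ref{thm:eigenequiv}. By that theorem I may assume $f$ is normalized: $f(0)=1$, the set $\{j : c_j \neq 0\}$ of nonzero exponents has greatest common divisor $1$, and $f$ is not a nontrivial power $g(x)^c$ with $c>1$. Under these normalizations, ``$f$ similar to $1+x$'' forces $f=1+x$, and ``$f$ similar to $1+x+x^2$'' reduces to a small list of candidates, so the conjecture becomes a concrete polynomial identification problem.

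For the bound $\lambda\geq 3$ with equality only for $f\sim 1+x$, I would use the recursion
\[ r(2^{k+1}) = r(2^k) + \sum_{i=0}^{2^k-1} q(2i+1), \]
together with the decomposition $f(x) = f_e(x^2) + x\, f_o(x^2)$ into even and odd parts. Since $f\cdot f^i(x^2)$ splits as $f_e(x^2)f^i(x^2) + x f_o(x^2) f^i(x^2)$, with the two summands living in disjoint (even versus odd) degrees, one obtains $q(2i+1) = q(f_e \cdot f^i) + q(f_o \cdot f^i)$, and the normalization forces both $f_e$ and $f_o$ to be nonzero. The goal is the averaged estimate $\sum_{i<2^k} [q(f_e f^i) + q(f_o f^i)] \geq 2\,r(2^k)$, which yields $\lambda\geq 3$; equality would force each of $f_e, f_o$ to be a monomial (so $f$ has exactly two nonzero terms), and then the gcd-$1$ normalization gives $f=1+x$. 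The crux here is an ``average non-cancellation'' lemma of the form $\sum_{i<N} q(g\cdot f^i) \geq \sum_{i<N} q(f^i)$ for any nonzero polynomial $g$ and $N$ a power of $2$; the pointwise version fails (e.g.\ $g=1+x$, $f=1+x+x^2$, $i=1$ gives $q(gf)=2 < q(f)=3$), so the argument must balance cancellations across rows, presumably by pairing rows where $g$ causes cancellation with rows where it is expansive.

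For the gap statement $\lambda>3 \Rightarrow \lambda\geq 1+\sqrt 5$, which is the hard part, my strategy is to show that any normalized $f$ not similar to $1+x$ has enough accessible-block structure to force $\lambda$ at least the Perron eigenvalue $1+\sqrt 5$ of the matrix $B_0 = \bigl[\begin{smallmatrix}2&0&2\\1&1&2\\1&1&0\end{smallmatrix}\bigr]$ associated with $1+x+x^2$. Under our normalizations $f$ has at least three nonzero terms, and I would argue via the accessibility analysis of Section~\ref{sec:Blocks} that the ``triangle'' transitions corresponding to $1+x+x^2$ must arise somewhere in the coefficient dynamics of $f$. I would then embed a nonnegative matrix entrywise dominating $B_0$ as a principal submatrix of some power $B^m$ of Willson's matrix for $f$; by Perron--Frobenius monotonicity this yields $\lambda(B)^m \geq \lambda(B_0)^m$, hence $\lambda(B)\geq 1+\sqrt 5$. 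The equality case $\lambda=1+\sqrt 5$ is then handled by tracking where all the inequalities tighten, forcing $B$ itself to coincide (up to similarity) with $B_0$.

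The main obstacle is the embedding step: making precise the sense in which the ``$1+x+x^2$ pattern'' is forced in any normalized $f$ that is not $1+x$, and extracting a principal submatrix of $B$ (or some $B^m$) that entrywise dominates $B_0$. This requires a careful case analysis of the block dynamics from Section~\ref{sec:Blocks}, and may instead demand a more conceptual approach, such as a variational characterization of $\lambda$ in terms of the cellular automaton produced by $f$, or a direct lower bound on $\lambda$ in terms of the number of accessible $2$-blocks. A further subtlety is that the conjecture asserts an actual gap with no $\lambda$ strictly between $3$ and $1+\sqrt 5$, so the strategy must explicitly rule out intermediate values rather than merely provide a generic lower bound, which is a separate difficulty from the ``generic'' strict inequality $\lambda>3$.
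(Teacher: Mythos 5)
The statement you are addressing is presented in the paper as a \emph{conjecture}, with no proof offered, so there is no argument of the paper to compare yours against; what you have written is a research plan rather than a proof, and it does not close the conjecture. Your normalization via Theorem \ref{thm:eigenequiv} and the even/odd decomposition $q(f^{2i+1})=q(f_e f^i)+q(f_o f^i)$ are correct and a sensible starting point, but each of the three remaining steps has a genuine gap. First, the ``average non-cancellation'' lemma $\sum_{i<N} q(g\cdot f^i)\ge \sum_{i<N} q(f^i)$ is exactly where the content of the bound $\lambda\ge 3$ lives; you give no argument for it, it is not a known statement, and it is at least as hard as the claim it is meant to imply. Second, even granting that lemma, the equality analysis does not follow: $\lambda$ is the exponential growth rate of $r(2^k)=uB^kv$, and an inequality $r(2^{k+1})\ge 3\,r(2^k)$ that is strict by an additive amount (even at every $k$) does not force $\lambda>3$; to conclude that $\lambda=3$ makes $f_e$ and $f_o$ monomials you would need a multiplicative improvement $r(2^{k+1})\ge(3+\epsilon)\,r(2^k)$, which the averaged lemma as stated cannot deliver. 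Third, for the gap $\lambda>3\Rightarrow\lambda\ge 1+\sqrt5$ the embedding step is, as you acknowledge, entirely open, and the Perron--Frobenius step is mis-stated: if a principal submatrix of $B^m$ dominates $B_0$ entrywise, monotonicity gives only $\lambda(B)^m\ge\lambda(B_0)=1+\sqrt5$, i.e. $\lambda(B)\ge(1+\sqrt5)^{1/m}$, which for $m>1$ is strictly weaker than the claimed bound; you would need to dominate $B_0^m$, a much stronger requirement that your sketch does not address. The conjecture remains open after your proposal.
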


\begin{question}
Is it true that $\lambda(f)=\lambda(g)$ if and only if $f(x)$ and $g(x)$ are similar in terms of the transformations described in Theorem \ref{thm:eigenequiv}?
\end{question}

\begin{theorem}
\label{eigenlimit}
For some polynomial $f(x)$ with degree at most $2^k$ and $p=2$, \\$\lambda(f) \leq 4(1-\frac{1}{2^{k+2}})^\frac{1}{k+1}$.
\end{theorem}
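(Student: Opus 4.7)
The plan is to prove the equivalent inequality $\lambda^{k+1}\le 4^{k+1}-2^k$, since taking $(k+1)$-th roots and factoring out $4$ on the right yields $\lambda\le 4(1-2^{-(k+2)})^{1/(k+1)}$. The upper bound $\lambda\le 4=p^2$ from Willson is the ``$k\to\infty$'' endpoint of this statement, so the extra factor $(1-2^{-(k+2)})^{1/(k+1)}$ must come from refining Willson's bound by looking not at $B$ but at $B^{k+1}$, and exploiting the degree hypothesis $\deg(f)\le 2^k$.

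Step 1 (reduction to $B^{k+1}$). Since $B$ has nonnegative entries, the Perron--Frobenius eigenvalue of $B^{k+1}$ is exactly $\lambda^{k+1}$, and for any nonnegative matrix the spectral radius is bounded by the maximum row sum. Hence
\[
\lambda^{k+1}\;\le\;\|B^{k+1}\|_\infty\;=\;\max_\sigma\sum_\tau (B^{k+1})_{\sigma\tau}.
\]
So it suffices to show every row sum of $B^{k+1}$ is at most $4^{k+1}-2^k$.

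Step 2 (combinatorial interpretation). By Willson's construction (Theorem~\ref{thm:Willson} together with Theorem~\ref{thm:transcomp}), the rows and columns of $B$ are indexed by the states $\sigma\in F_2[x]/x^d\setminus\{0\}$, and $(B^{k+1})_{\sigma\tau}$ counts walks of length $k+1$ from $\sigma$ to $\tau$ in the transition digraph, where each step corresponds to one doubling $i\mapsto 2i,2i+1$. In particular, the row sum at state $\sigma$ equals the total count of nonzero coefficients produced over $2^{k+1}$ consecutive rows of the coefficient triangle when the ``seed'' at coarse scale is the pattern encoded by $\sigma$. This is the object we must bound by $4^{k+1}-2^k$.

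Step 3 (the area bound, where the $2^k$ deficit must enter). A naive counting of positions in any block of $2^{k+1}$ consecutive rows with row widths $\le i\cdot 2^k+1$ gives an area on the order of $2^{3k+1}$, which is larger than $4^{k+1}=2^{2k+2}$ for $k\ge 2$; so a pure position count is not enough. The refinement I would pursue is the decomposition of Theorem~\ref{thm:transcomp}: $B=B_0+B_1+B_2+B_3$, where each $B_i$ is the incidence matrix of a (partial) self-map of $X=F_2[x]/x^d\setminus\{0\}$. Expanding $B^{k+1}=\sum_{i_0,\ldots,i_k\in\{0,1,2,3\}} B_{i_0}\cdots B_{i_k}$ gives $4^{k+1}$ summands; each summand contributes at most $1$ to a given row, so a row sum of $4^{k+1}$ would require \emph{every} composition of $k+1$ of these partial maps to be defined at the state $\sigma$. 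The degree hypothesis $\deg(f)\le 2^k$ is what forces at least $2^k$ of these $4^{k+1}$ compositions to fail at every $\sigma$ (because those compositions would correspond to producing a nonzero coefficient at a position beyond the degree of the relevant power of $f$, which is impossible). Executing this counting---identifying a combinatorially natural family of $2^k$ ``forbidden'' words $(i_0,\ldots,i_k)\in\{0,1,2,3\}^{k+1}$ at each state---is the main obstacle, and is what turns $4^{k+1}$ into $4^{k+1}-2^k$.

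Step 4 (conclusion and sanity check). Once Step 3 establishes $\max_\sigma\sum_\tau(B^{k+1})_{\sigma\tau}\le 4^{k+1}-2^k$, we get $\lambda^{k+1}\le 4^{k+1}-2^k=4^{k+1}(1-2^{-(k+2)})$ and hence the claim. The case $k=0$, $f=1+x$ with $\lambda=3$ shows the bound is tight there ($\lambda^1=3=4^1-2^0$), and the case $k=1$, $f=1+x+x^2$ with $\lambda^2=6+2\sqrt5\approx 10.47<14=4^2-2^1$ gives a consistency check. An equivalent alternative, which I would try in parallel if the walk-counting in Step 3 proves cumbersome, is to produce an explicit positive vector $v$ with $B^{k+1}v\le (4^{k+1}-2^k)v$ entrywise; any such $v$ immediately gives $\lambda^{k+1}\le 4^{k+1}-2^k$ by Perron--Frobenius, and the weights on $v$ should be read off from the number of positions each state $\sigma$ controls in a window of width $2^{k+1}$.
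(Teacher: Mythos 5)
Your reduction is sound and matches the paper's in substance: bounding $\lambda^{k+1}$ by the maximum number of length-$(k+1)$ walks out of a state, writing $B^{k+1}$ as a sum of $4^{k+1}$ compositions of the four partial self-maps of $X=F_2[x]/x^d\setminus\{0\}$, and aiming to show that at least $2^k$ of these compositions are undefined at every state. But that last step is the entire content of the theorem, and you explicitly leave it as ``the main obstacle'' rather than proving it; as written this is a gap, not a proof. Your guessed mechanism for the failures (``producing a nonzero coefficient at a position beyond the degree of the relevant power of $f$'') is also not the right one, and the ``area bound'' digression in Step 3 is a red herring.

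The missing identification is short. Among the four maps, two are the substitution maps $g(x)\mapsto g(x^2)$ and $g(x)\mapsto x\,g(x^2)$ (computed in $F_2[x]/x^d$). Take any word of length $k+1$ whose first letter is $g\mapsto x\,g(x^2)$ and whose remaining $k$ letters are arbitrary choices among these two substitution maps; there are exactly $2^k$ such words. After the first step the image has valuation (lowest exponent of a nonzero term) at least $1$, and each subsequent substitution step at least doubles the valuation, so after $k+1$ steps the valuation is at least $2^k\geq \deg f=d$, i.e., the state is $0\bmod x^d$, which lies outside $X$. Hence at every vertex at least $2^k$ of the $4^{k+1}$ length-$(k+1)$ compositions die, every row sum of $B^{k+1}$ is at most $4^{k+1}-2^k$, and your Steps 1 and 4 then finish the argument exactly as you outlined. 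With this lemma inserted, your proof coincides with the paper's.
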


\begin{proof}
Define $k$ such that the degree of $f(x)$ is at most $2^k$, with $p=2$. From Theorem \ref{thm:transcomp}, we can draw an oriented graph whose vertices are elements of $X$ and whose edges correspond to the four maps. Therefore there are exactly four edges coming out of each vertex. Therefore if $Q(n)$ is the number of paths in the graph of length $n$, we have $ \displaystyle \log\lambda=\limsup_{n\to\infty}\frac{\log Q(n)}{n}$. From the definition of Willson's method, Theorem \ref{thm:Willson}, two of the four mappings correspond to $g(x) \to g(x^2)$ and $g(x) \to x\cdot g(x^2)$. Assume deg$(f(x))=2^k$. Then a path starting from any $g(x)$ and moving first to $x\cdot g(x^2)$ then alternating in any way between the two mappings leads to $0$ after $k+1$ steps. So the number of such paths of length $k+1$ is $2^k$. So the number of paths of length $k+1$ from any point that avoids $0$ is at most $4^{k+1}-2^k$. Thus the number of such paths of length $n\cdot(k+1)$ is at most $(4^{k+1}-2^k)^n$. This gives us the bound of $\lambda \leq 4(1-\frac{1}{2^{k+2}})^\frac{1}{k+1}$.
\end{proof}

For $k=0$, the only polynomial is $1+x$, so the bound $\lambda \leq 4(1-\frac{1}{4})^1=3$ is sharp. However, for $k=1$ the bound tells us that $\lambda \leq \sqrt{14}$ which is not sharp. Furthermore, this bound approaches $4$ as $k$ approaches $\infty$.

\begin{conjecture}
Let $\Lambda_k$ be the maximal $\lambda(f)$ for deg $f\leq k$. Then $lim_{k\to\infty} \Lambda_k=4$.
\end{conjecture}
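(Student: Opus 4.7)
The plan is to separately establish the upper and lower limits. The upper bound $\limsup_{k\to\infty}\Lambda_k\le 4$ is essentially already in hand: taking $m=\lceil\log_2 k\rceil$ in Theorem~\ref{eigenlimit}, one gets $\Lambda_k\le 4\bigl(1-2^{-m-2}\bigr)^{1/(m+1)}$, and the right-hand side tends to $4$ as $k\to\infty$. Since $\Lambda_k$ is monotone non-decreasing in $k$ (enlarging the admissible set of degrees cannot decrease the maximum), this gives $\limsup_k \Lambda_k \le 4$. The substantive content of the conjecture is the matching lower bound: for every $\epsilon>0$ we must exhibit some polynomial $f$ over $F_2$ with $\lambda(f)\ge 4-\epsilon$.

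To attack the lower bound, I would first reformulate the target analytically. Since $r_{f,2}(2^n)=\Theta(\lambda(f)^n)$, showing $\lambda(f)\ge 4-\epsilon$ is equivalent to showing that the first $2^n$ rows of the cellular automaton for $f$ collectively contain at least $c\cdot(4-\epsilon)^n$ nonzero coefficients for some $c>0$. Geometrically, this says the associated fractal has Hausdorff dimension $\log_2\lambda$ approaching the full plane dimension $2$, i.e.\ that the Sierpinski-like pattern fills the triangle with positive density as we zoom out.

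The candidate construction is a family of polynomials of growing degree engineered so that their iterated powers stay coefficient-dense. Natural candidates include the trinomials $f_k(x)=1+x+x^{2^k-1}$, or polynomials whose coefficient supports form a Sidon-type set in $\{0,1,\dots,d\}$, so that the convolutions $f^i\bmod 2$ avoid systematic cancellation. A parallel probabilistic route is to show that a uniformly random polynomial of degree $d$ satisfies $\lambda(f)\ge 4-\epsilon(d)$ with positive probability, where $\epsilon(d)\to 0$, by controlling the expected density of nonzero coefficients of $f^i$ via a second-moment estimate.

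The main obstacle is that the coefficients of $f^i$ are highly correlated: by Lemma~\ref{lem:multipower} the sparse ``Lucas slices'' $i=2^k$ yield $f^{2^k}=f(x^{2^k})$, which is always very sparse, so any naive density argument breaks down at these indices. A more refined strategy would dyadically group the indices $i\in[0,2^n)$, argue that within each dyadic block only a vanishing fraction of indices are Lucas-sparse, and then use Willson's matrix $B$ from Theorem~\ref{thm:transcomp} to convert density bounds into a lower bound on $\lambda$ via Perron--Frobenius. Concretely, the goal is to show that for the constructed family the fraction of length-$n$ paths in Willson's graph that reach the absorbing state $0\in X$ decays faster than $(1-\epsilon/4)^n$; combined with the upper bound and $k\to\infty$, this would close the gap. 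Controlling this ``death set'' in Willson's graph for a carefully chosen family is where I expect the real technical difficulty to lie, and it is quite possible that a fundamentally new construction — rather than trinomials or Sidon supports — is needed to saturate the bound $4$.
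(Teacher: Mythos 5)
This statement is a \emph{conjecture} in the paper: the author gives no proof, and explicitly lists proving the Section~3 conjectures as future work. So there is no ``paper's own proof'' to compare against, and your proposal should be judged on whether it actually closes the problem. It does not. The upper-bound half is fine but also essentially free of content: by Wilson's result quoted in the introduction one already has $\lambda\le p^2=4$ for every $f$ (equivalently, Willson's graph has out-degree $4$, so there are at most $4^n$ paths of length $n$), so $\limsup_k\Lambda_k\le 4$ needs no appeal to Theorem~\ref{eigenlimit}; that theorem only sharpens the bound to something strictly below $4$ for each fixed degree. The entire substance of the conjecture is the lower bound, and there your text is a list of candidate strategies, not an argument.

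Concretely, the gap is this: you never show, for any explicit family $f_k$ (trinomials $1+x+x^{2^k-1}$, Sidon supports, or random polynomials), that the number of nonzero coefficients among the first $2^n$ rows is at least $c\,(4-\epsilon_k)^n$ with $\epsilon_k\to 0$. The heuristic that a Sidon-type support ``avoids systematic cancellation'' is unsubstantiated over $F_2$: by Lemma~\ref{lem:multipower} the Frobenius forces $f^{2^j}=f(x^{2^j})$, so every family has infinitely many extremely sparse rows, and the correlations propagate to nearby rows via $f^{2^j+r}=f(x^{2^j})f^r$. You acknowledge this obstacle and propose to handle it by showing that the fraction of length-$n$ paths in Willson's graph absorbed at $0$ decays slowly, but you give no mechanism for bounding that ``death set'' for any specific family --- and you concede a fundamentally new construction may be needed. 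That concession is accurate: as written, the proposal establishes only $\limsup_k\Lambda_k\le 4$ (which is classical) and leaves the actual content of the conjecture open, exactly as the paper does.
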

\begin{remark} 
Similarly for $p>2$, one may conjecture that  $lim_{k\to\infty} \Lambda_k=p^2$.
\end{remark}
Through computer analysis of $\lambda$ for $p=2$ and $deg\Big(f(x)\Big)\leq9$, Conjecture \ref{conj:lambdadegree} arises.

\begin{conjecture}
\label{conj:lambdadegree}
The degree of the minimal polynomial of $\lambda$ is less than or equal to $2^{deg(f)-1}$ for $p=2$.
\end{conjecture}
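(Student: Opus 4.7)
The plan is to produce a $B$-invariant subspace $W \subset \mathbb{Q}^X$ of dimension at least $2^{d-1}-1$, where $d = \deg f$ and $X = \mathbb{F}_2[x]/x^d \setminus \{0\}$, such that the Perron--Frobenius eigenvector of $B$ does not lie in $W$. Once this is accomplished, the induced action of $B$ on the quotient $\mathbb{Q}^X/W$ has dimension at most $2^d - 1 - (2^{d-1}-1) = 2^{d-1}$, and $\lambda$ remains an eigenvalue of this induced action (since the Perron eigenvector projects to a nonzero eigenvector). The minimal polynomial of $\lambda$ over $\mathbb{Q}$ therefore divides the characteristic polynomial of this quotient action, whose degree is at most $2^{d-1}$, yielding the desired bound.

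The first step is to understand the ``extra'' dimension carefully using the four self-maps $\phi_1,\ldots,\phi_4$ of $X$ provided by Theorem \ref{thm:transcomp}. In the $d=2$ example $f = 1+x+x^2$, a direct computation gives the factorization $\chi_B(t) = (t-1)(t^2-2t-4)$: the ``extra'' linear factor $(t-1)$ absorbs exactly the single dimension beyond $2^{d-1} = 2$, and $\lambda = 1+\sqrt{5}$ indeed has minimal polynomial of degree $2$. Moreover, the eigenvalue $1$ is realized by an eigenvector that is a $\mathbb{Q}$-linear function of the coefficients of $g$ (explicitly, $\mu(g) = -2g_0 + 3g_1$). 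This suggests a natural family of candidates for $W$: subspaces spanned by the evaluation vectors $(\mu(g))_{g \in X}$ for $\mu$ ranging over a well-chosen family of low-degree polynomial functionals on the coefficients of $g$.

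The second step is to construct such a $W$ in general. The idea is to identify a vector space of functionals $\mu:\mathbb{F}_2[x]/x^d\to\mathbb{Q}$ (with $\mu(0)=0$) that is stable under the operation $\mu \mapsto \bigl(g \mapsto \sum_{i=1}^{4} \mu(\phi_i(g))\bigr)$, so that $B$ preserves the corresponding subspace of $\mathbb{Q}^X$. A promising starting point is to take $\mu$ to range over functions depending only on $g \bmod x^{d-1}$, motivated by the fact that several of the Willson maps commute (at least partially) with reduction $\mathbb{F}_2[x]/x^d \twoheadrightarrow \mathbb{F}_2[x]/x^{d-1}$, and to enlarge this family if necessary by adding functionals generated by iteratively applying $B^{\!\top}$. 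Finally, one rules out that the Perron eigenvector lies in $W$ using positivity: since $B$ has nonnegative integer entries and the Perron eigenvector is nonnegative (and is strictly positive on the irreducible component supporting the growth), while $W$ typically contains vectors with mixed signs, a pairing argument with an all-ones functional should suffice.

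The hardest step will be the explicit construction in step two: producing a family of functionals whose span has dimension at least $2^{d-1}-1$ and is genuinely $B$-invariant for every $f$. For small $d$ this can be verified by hand and matches the observed factorization of $\chi_B(t)$, but the general case likely requires an inductive argument paralleling the recursion $\mathbb{F}_2[x]/x^{d-1} \hookrightarrow \mathbb{F}_2[x]/x^d$ and a careful bookkeeping of how each Willson map acts on polynomial functionals of the coefficients. The data in Table \ref{tbl:eigenvalues} give support: for $d=1,2,3$ the bound $2^{d-1}$ is saturated, while for $4 \le d \le 6$ there is strict slack, which is consistent with the invariant subspace $W$ potentially having dimension strictly larger than $2^{d-1}-1$ for larger $d$.
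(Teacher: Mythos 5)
First, note that the statement you are trying to prove is stated in the paper only as a conjecture: it is supported by computer calculations for $p=2$ and $\deg(f)\le 9$ and is explicitly listed in the conclusion as an open problem, so there is no proof in the paper to compare against. A correct argument here would therefore be genuinely new. Your reduction is sound as far as it goes: if $W\subset\mathbb{Q}^X$ is $B$-invariant of dimension at least $2^{d-1}-1$ and the Perron eigenvector $v$ does not lie in $W$, then the image of $v$ in $\mathbb{Q}^X/W$ is a nonzero $\lambda$-eigenvector of the induced rational matrix, so the minimal polynomial of $\lambda$ divides a rational characteristic polynomial of degree at most $(2^d-1)-(2^{d-1}-1)=2^{d-1}$. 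Your $d=2$ computation is also correct: $\chi_B(t)=t^3-3t^2-2t+4=(t-1)(t^2-2t-4)$, and the eigenvalue $1$ is carried by the evaluation vector of the linear functional $-2g_0+3g_1$.

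The gap is that the entire content of the conjecture is concentrated in your ``step two,'' which you do not carry out. You need, for \emph{every} $f$ of degree $d$, a space of functionals of dimension at least $2^{d-1}-1$ that is stable under $\mu\mapsto\sum_i\mu\circ\phi_i$ and does not contain (the dual pairing obstruction to) the Perron direction; you offer only the candidate ``functionals factoring through $g\bmod x^{d-1}$'' together with the escape hatch of enlarging by iterating $B^{\top}$. That escape hatch is self-defeating: the $B^{\top}$-invariant subspace generated by your seed functionals could a priori be all of $\mathbb{Q}^X$, and nothing you say controls its dimension from above or keeps it away from the Perron eigenvector. Two of the four Willson maps do not obviously commute with reduction modulo $x^{d-1}$ (they involve the top coefficient of $f\cdot g$ in an essential way), so even the invariance of the seed space is unverified beyond $d=2$. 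Likewise, ``$W$ typically contains vectors with mixed signs'' is not an argument that $v\notin W$; you would need to exhibit a specific positive functional annihilating $W$ but not $v$. As written, the proposal is a reasonable research plan --- and the quotient-space framing is a plausible route to the conjecture --- but it is not a proof, and the conjecture should be regarded as still open.
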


\section{Conclusion and Directions of Future Research}
\label{sec:Conclusion}

Natural goals for further study of the phenomena examined in this paper include the following:
\begin{itemize}
\item Obtain recursion relations, generating functions, and limiting functions as in Section 2 for $a_{f(x),p}(n)$ in the case $deg\Big(f(x)\Big)>1$;
\item Prove Conjecture \ref{conj:mystery1} on the functional equation for the generating function for $a_{f(x),p}(n)$;
\item Prove the conjectures in section 3 on the behavior of the eigenvalues $\lambda$ and obtain better upper bounds;
\item Find, tighten, and explore the upper bound mentioned in Conjecture \ref{conj:lambdadegree};
\item Study the algebras generated by the four transformations composing the Willson matrices and find analogs for larger $p$.
\end{itemize}

\section{Acknowledgments} 

%Special thanks go to the Center for Excellence in Education and the Research Science Institute for providing this opportunity and experience. 
%In addition, I would like to thank my mentor Dorin Boger and head mathematics mentors Dr.\ Pavel Etingof and Tanya Khovanova for their assistance, support, ideas, and advice throughout this project. 
%Furthermore, I would like to thank Dr.\ John Rickert for helping with editing. 
%I would also like to thank Mr.\ Sohaib Abbasi, Mr.\ Chris Boorman, Mr.\ Earl E. Fry and Ms.\ Deborah Wiltshire from Informatica and Ms.\ Dahlina Geilman from The Milken Family Foundation for sponsoring me. 
%Finally, I would like to thank Mr.\ Joel Sachs, Mr.\ Jack Schrier, and Ms.\ Robin Wright for naming me as a Arnold and Kay Clejan Charitable Foundation scholar.
Thanks go to the Center for Excellence in Education, the Research Science Institute, and the Massachusetts Institute of Technology for the opportunity to work on this project.
I would also like to thank Dr.\ Pavel Etingof for suggesting and supervising the project and Dorin Boger for mentoring the project. 
I would also like to thank RSI head mentor Tanya Khovanova for many useful discussions, ideas, suggestions, and feedback and Dr.\ John Rickert for feedback on the paper.
Finally, I would like to give thanks to Informatica, The Milken Family Foundation, and the Arnold and Kay Clejan Charitable Foundation for their sponsorship.
%The author would like to thank the Center for Excellence in Education and the Research Science Institute,  Dr.\ Pavel Etingof, Dorin Boger, Tanya Khovanova, and Dr.\ John Rickert. 
%The author would also like to thank Informatica, The Milken Family Foundation, and the Arnold and Kay Clejan Charitable Foundation for their sponsorship.

\begin{singlespace}
\bibliographystyle{abbrv}

\bibliography{biblio}
\end{singlespace}

% This is appa.tex
%
% This file is for appendices.  If you don't have any appendices, delete
% the percent sign from the beginning of the next line.
%\endinput

\appendix

% Add additional appendices by replicating the line below.

%\section{$\emph{Mathematica}$ Files and Data}  
%
%The $\emph{Mathematica}$ files containing our research and data are
%provided at \url{http://web.mit.edu/kgarbe/www} .

\end{document}